\renewcommand{\geq}{\geqslant}
\renewcommand{\leq}{\leqslant}
\renewcommand{\le}{\leqslant}
\def\cjref#1{Conjecture~$\ref{#1}$}
\DeclareMathOperator{\per}{per}
\DeclareMathOperator{\tr}{tr}
\theoremstyle{plain}
\newtheorem{theorem}{Theorem}[section]
\newtheorem{lemma}{Lemma}[section]
\theoremstyle{definition}
\newtheorem{definition}{Definition}
\newtheorem{conjecture}{Conjecture}
\newtheorem{problem}{Problem}
\numberwithin{equation}{section}
\newcommand{\blue}[1]{{\color{blue}#1}}
\begin{document}

\title{What did Ryser Conjecture?}
\author{Darcy Best and Ian M. Wanless\\
\small School of Mathematical Sciences\\[-0.5ex]
\small Monash University\\[-0.5ex]
\small Vic 3800, Australia\\
\small\tt \{darcy.best,ian.wanless\} \ @monash.edu
}
\date{}

\maketitle

\begin{abstract}
Two prominent conjectures by Herbert J.\ Ryser have been falsely
attributed to a somewhat obscure conference proceedings that he wrote
in German.  Here we provide a translation of that paper and try to
correct the historical record at least as far as what was conjectured
in it.

The two conjectures relate to transversals in Latin squares of odd
order and to the relationship between the covering number and the
matching number of multipartite hypergraphs.
\end{abstract}

\section*{Introduction}

The aim of this document is to make readily available the contents of the
following conference proceedings:

\medskip\noindent
H.\,J.~Ryser, Neuere Probleme in der Kombinatorik, {\it Vortrage
\"{u}ber Kombinatorik Oberwolfach}, 24--29 July (1967), {69--91}.

\medskip\noindent
hereafter referred to as [Rys67]. The reason for doing this is that 
[Rys67] has been attributed as the source of several 
important conjectures. However, it seems that these attributions are
not correct. So this is an attempt to straighten out the historical record.

We should begin with the important caveat that neither of us are
fluent in German, the language of the original document. However we
did consult native speakers Kevin Leckey and Anita Leibenau on key passages,
and rely heavily on google translate elsewhere. We could not have managed
without those people/resources, but they should not be blamed for any errors
in what is presented below.

The conjectures of present interest are:

\begin{conjecture}\label{cj:parity}
The number of transversals in a Latin square of order $n$ is
congruent to $n$ mod $2$.
\end{conjecture}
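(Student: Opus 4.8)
The plan is to convert the combinatorial count into an algebraic one and then exploit a parity collapse. Index rows, columns and symbols by $[n]=\{1,\dots,n\}$, introduce one formal variable $z_s$ per symbol, and write $A$ for the matrix with entries $A_{ij}=z_{L(i,j)}$. A diagonal $\{(i,\sigma(i))\}$ is a transversal precisely when the symbols $L(i,\sigma(i))$ are pairwise distinct, so if $T(L)$ denotes the number of transversals then $T(L)$ is the coefficient of the squarefree monomial $z_1z_2\cdots z_n$ in $\per(A)=\sum_{\sigma}\prod_i z_{L(i,\sigma(i))}$. Since $\per\equiv\det\pmod 2$, this yields the clean congruence
\begin{equation}\label{eq:signed}
T(L)\ \equiv\ [z_1\cdots z_n]\det(A)\ =\ \sum_{T}\operatorname{sgn}(\sigma_T)\pmod 2,
\end{equation}
so it suffices to control a \emph{signed} transversal count, where the extra algebraic structure of the determinant becomes available.

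First I would extract the structural factor hiding in $\det(A)$. Writing $B^{(s)}$ for the permutation matrix recording the cells that contain symbol $s$, we have $A=\sum_s z_s B^{(s)}$ with $\sum_s B^{(s)}=J$, so every row sum of $A$ equals $Z:=z_1+\cdots+z_n$. Adding all columns into the first and factoring then shows $Z$ divides $\det(A)$ in $\Z[z_1,\dots,z_n]$; write $\det(A)=Z\cdot Q$. Extracting the squarefree coefficient gives $T(L)\equiv\sum_{s}\bigl[\prod_{t\ne s}z_t\bigr]Q\pmod 2$, reducing the parity of the transversal count to the parities of $n$ ``symbol-deleted'' coefficients of $Q$; this is the natural vehicle for an induction on $n$.

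For even $n$ I would aim to finish by a fixed-point-free, sign-reversing involution on the set of transversals: pairing each transversal with one whose column permutation $\sigma$ has the opposite sign forces the signed sum in \eqref{eq:signed} to vanish modulo $2$. The candidate pairings are exchanges supported on a canonically chosen pair of rows and columns (an intercalate-type swap), and the task is to show that for even $n$ such a canonical exchange can always be performed and is genuinely an involution. Crucially, any such involution \emph{must} break down when $n$ is odd, since there the signed count has to be odd; this asymmetry is exactly what the factor $Z$, tracked through its value $Z\big|_{z=1}=n$, records.

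The hard part is the odd case, and it is the reason the statement remains a conjecture. Here one must show the signed transversal count is odd, with no sign-reversing involution available; a successful attack would instead have to prove $\sum_s\bigl[\prod_{t\ne s}z_t\bigr]Q\equiv 1\pmod 2$ directly, presumably by induction through the symbol-deletion reduction above, using the Latin constraints that each $B^{(s)}$ is a permutation matrix and that the $B^{(s)}$ partition $J$. I expect the genuine obstacle to be that deleting a symbol destroys the Latin property, so the reduced coefficients of $Q$ are no longer transversal counts of smaller Latin squares and resist clean inductive control; bridging that gap is precisely what is still open.
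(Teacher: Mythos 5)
You are attempting to prove a statement that is false, and that the paper itself never proves. This document is a historical note: \cjref{cj:parity} is stated only so the authors can discuss its misattribution to [Rys67]. The introduction records that the even half (even $n$ implies an even number of transversals) was proved by Balasubramanian \cite{Bal90}, while the odd half is \emph{false}: there are many counterexamples of order $7$ and above \cite{CW05,MMW06,plexes}, i.e., Latin squares of odd order with an even number of transversals. Consequently your closing assessment that the odd case ``remains a conjecture'' and merely resists your symbol-deletion induction is wrong in a stronger sense than you suppose: the congruence you would need for odd $n$, namely that the transversal count is odd, simply does not hold in general, so no refinement of your factorization $\det(A)=Z\cdot Q$ and its squarefree-coefficient reduction can ever close that case. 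The only true open statement in this direction for odd orders is \cjref{cj:odd} (every Latin square of odd order has at least one transversal), which is an existence claim, not a parity claim.

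Your treatment of the even half is the right idea --- $\per\equiv\det\pmod 2$, so $T(L)$ is congruent to a signed transversal count --- and this is indeed the spirit of the known proof of that half. But even here your argument is a sketch rather than a proof: the fixed-point-free sign-reversing involution is only posited (``an intercalate-type swap''), and its existence is precisely the hard content, since a transversal of a Latin square need not meet any intercalate in a way that permits a canonical, involutive exchange. So as written the proposal establishes neither half: the even half needs the involution (or Balasubramanian's actual algebraic argument \cite{Bal90}) genuinely supplied, and the odd half is unprovable because it is false.
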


\begin{conjecture}\label{cj:odd}
Every Latin square of odd order has at least one transversal.
\end{conjecture}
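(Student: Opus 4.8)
The cleanest line of attack is to deduce \cjref{cj:odd} from the stronger parity statement of \cjref{cj:parity}. Let $T(L)$ denote the number of transversals of a Latin square $L$ of odd order $n$. If $T(L)\equiv n\pmod 2$, then for odd $n$ the integer $T(L)$ is odd, hence nonzero, and $L$ possesses at least one transversal. So the entire burden can be shifted onto determining the parity of $T(L)$, and I would spend no further effort on the existence statement directly.

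To compute that parity I would first turn $T(L)$ into a single algebraic expression. For each column $j$ and symbol $s$ let $\rho(j,s)$ be the unique row with entry $s$ in column $j$; a transversal is then a permutation $\pi\in S_n$ (assigning symbol $\pi(j)$ to column $j$) for which $j\mapsto\rho(j,\pi(j))$ is a bijection of the rows. Enforcing surjectivity of this derived map by inclusion--exclusion over the set $S$ of rows it avoids yields
\[
T(L)=\sum_{S\subseteq[n]}(-1)^{|S|}\per(M_S),
\]
where $M_S$ is the $0/1$ matrix with $(M_S)_{j,s}=1$ exactly when $\rho(j,s)\notin S$. Reducing modulo $2$ kills the signs and replaces each permanent by a determinant over $\mathbb{F}_2$, so the goal becomes the concrete identity $\sum_{S\subseteq[n]}\det(M_S)\equiv n\pmod 2$.

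The hard part is precisely this parity identity, which is the content of \cjref{cj:parity} and is the point at which the argument currently stalls. The natural hope is a parity-preserving involution on the non-transversal selections---for instance one that locates the repeated row of the map $j\mapsto\rho(j,\pi(j))$ and transposes the two columns responsible---so that all non-transversals cancel in pairs and the surviving configurations are counted with the parity of $n$; but I do not see how to make such a map simultaneously well defined, fixed-point-free off the transversals, and involutive, and no correct construction is presently known. As both a sanity check and a candidate base for a structural or inductive attack, the case where $L$ is the Cayley table of a group of odd order is already settled: such a group has trivial Sylow $2$-subgroup and therefore admits a complete mapping, which is exactly a transversal of its multiplication table.
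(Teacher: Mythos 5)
You are trying to prove something the paper does not prove and indeed cannot: the statement is \cjref{cj:odd}, which the paper records as an \emph{open conjecture} (Ryser's conjecture). In [Rys67] it is only verified for $n=5$ and proved under the extra hypothesis that the square is symmetric, in which case the main diagonal itself is a transversal (each symbol occurs an even number of times off the diagonal by symmetry, hence an odd number of times on the diagonal, hence exactly once since the $n$ diagonal cells must accommodate $n$ symbols each appearing at least once). So there is no proof to compare against, and your proposal must stand on its own. It does not, for a reason stated explicitly in this very paper: the odd half of \cjref{cj:parity} is \emph{false}. There are many Latin squares of odd order $7$ and above whose number of transversals is even \cite{CW05,MMW06,plexes}. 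Your entire strategy is to reduce \cjref{cj:odd} to the parity statement $T(L)\equiv n\pmod 2$, so the identity $\sum_{S\subseteq[n]}\det(M_S)\equiv n\pmod 2$ you hope to establish is simply not true for odd $n$, and no involution argument, however cleverly designed, can prove it. This is not a fixable gap in the write-up; the reduction is to a false statement, so the approach is dead on arrival. (Only the even half of \cjref{cj:parity}, proved by Balasubramanian \cite{Bal90}, is true, and that half says nothing about existence of transversals in the odd case.)

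Two fragments of your proposal are individually sound but do not amount to progress. The inclusion--exclusion formula $T(L)=\sum_{S\subseteq[n]}(-1)^{|S|}\per(M_S)$ is correct, and the observation that a group of odd order admits a complete mapping (so its Cayley table has a transversal) is a genuine classical special case. But you yourself concede the central step ``currently stalls,'' so even setting aside the falsity of the target identity, what you have written establishes nothing beyond that special case. If you want a partial result in the spirit of the paper, prove the symmetric case sketched above, which is what [Rys67] actually does; the general conjecture remains open and should be presented as such, not as something reachable by a parity argument.
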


\begin{conjecture}\label{cj:hyper}
In an $r$-partite hypergraph, $\tau\le(r-1)\nu$ where $\tau$ is the
covering number and $\nu$ is the matching number.
\end{conjecture}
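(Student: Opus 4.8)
The plan is to attack \cjref{cj:hyper} by induction on $r$, using the $r=2$ case as the base and the trivial greedy bound as a sanity check. Throughout I take the hypergraph to be $r$-uniform as well as $r$-partite, with sides $V_1,\dots,V_r$, so that every edge meets each side in exactly one vertex; this is the setting in which the bound is normally intended. For $r=2$ the hypergraph is just a bipartite graph, and K\"onig's theorem gives $\tau=\nu$, which is exactly the claimed bound $(r-1)\nu=\nu$ (and shows it is tight). At the other extreme, a maximum matching $M$ with $|M|=\nu$ is in particular maximal, so every edge meets some edge of $M$ and the $r\nu$ vertices lying on $M$ form a cover; this yields the easy bound $\tau\le r\nu$. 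The entire content of the conjecture is therefore to shave the last factor, replacing $r$ by $r-1$.

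To save that factor I would pass to a topological, defect version of Hall's theorem. The idea is to view a small cover as the complement of a large ``independent'' family of edges, and to extract a system of disjoint representatives whose existence is guaranteed by a connectivity condition on independence complexes. The governing fact is that the independence complex of the line graph of a graph with matching number $\nu$ is roughly $(\nu/2)$-connected, and feeding this into the Aharoni--Haxell ``Hall theorem for hypergraphs'' produces exactly the constant $2$, i.e.\ $r-1$ when $r=3$. Carried out rigorously this reproves the known case $r=3$, and it is the natural engine to try to iterate.

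For the inductive step I would fix a vertex $v$ in the last side $V_r$, form the link $H_v$ consisting of the $(r-1)$-sets $e\setminus\{v\}$ for edges $e\ni v$, and relate $\tau(H)$ and $\nu(H)$ to the covers and matchings of the various links. A promising sub-target is the intersecting case $\nu=1$: there the conjecture asserts that a family of pairwise-intersecting edges can be pierced by $r-1$ vertices, and truncated projective planes of order $r-1$ show this would be sharp. If one could prove the intersecting case and control how link-covers assemble into a cover of $H$, an averaging or merging argument over a maximum matching should in principle deliver the general bound.

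The step I expect to be the genuine obstacle is precisely this induction. The topological connectivity that makes $r=3$ work degrades as $r$ grows, so the constant emerging from the Hall-type theorem is no longer $r-1$ but something larger, and there is no known way to merge link-covers without forfeiting the very saving we fought for. This is exactly the difficulty that keeps \cjref{cj:hyper} open for every $r\ge4$, and I would not expect the plan above to close the gap without an essentially new idea.
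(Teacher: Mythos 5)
You have not produced a proof, and to be clear about the ground truth: \cjref{cj:hyper} is a \emph{conjecture}, and the paper contains no proof of it --- the paper's entire point regarding this statement is historiographical, namely that it was wrongly attributed to [Rys67] and appears (in equivalent form) only in Henderson's thesis. The conjecture is known for $r=2$ (K\"onig's theorem, as you say) and for $r=3$ (Aharoni's theorem, via the Aharoni--Haxell topological Hall theorem, as you correctly sketch), and it is open for every $r\ge 4$. So there is no ``paper's own proof'' to compare your attempt against.

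Your write-up is an accurate survey of the state of the art rather than a proof: the greedy bound $\tau\le r\nu$ is right, the $r=2$ and $r=3$ cases are correctly attributed, the sharpness examples from truncated projective planes are the standard ones, and the intersecting case $\nu=1$ is indeed a natural (and itself still open, for general $r$) sub-target. But the step you would need --- assembling covers of the links, or iterating the topological connectivity argument, so that the constant stays at $r-1$ rather than degrading --- is exactly the step you concede you cannot carry out, and no one else can either. That missing inductive step is not a fixable local gap in an otherwise sound argument; it is the entire content of the conjecture for $r\ge 4$. A referee should therefore read your text as a research plan with an honest admission of failure at the decisive point, not as a proof of the statement.
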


All three have at times been attributed to [Rys67], but only
\cjref{cj:odd} is present in that document. The even half of
\cjref{cj:parity} was proved by Balasubramanian \cite{Bal90}, who
attributed the conjecture to [Rys67]. The odd half is false, and there
are many counterexamples for orders 7 and above, as has been noted in
numerous places including in \cite{CW05,MMW06,plexes}.  The suggestion
that Ryser made \cjref{cj:parity} has been repeated many times,
including in \cite{AA04,MMW06,plexes,surv1,surv2}.  It is possible
that Ryser did make this conjecture verbally, but we have been unable
to find any evidence that he made it in print. The one thing we are
certain of is that it does not appear in [Rys67]. If anyone can shed
any light on where Balasubramanian obtained the conjecture, or point
to an earlier reference to it, we would be very interested to hear
from them.

\cjref{cj:parity} does of course imply \cjref{cj:odd}, and
\cjref{cj:odd} is present in [Rys67], as we will see.  In papers on
transversals of Latin squares, \cjref{cj:odd} is often referred to as
Ryser's conjecture.

\bigskip

We now turn to \cjref{cj:hyper}, which is also often referred to as
Ryser's conjecture, and has seen a flurry of activity recently.
\cjref{cj:hyper} was attributed to [Rys67] in \cite{ABW16},
due to a misunderstanding between the authors. This mistake was
then copied in \cite{FHMW17}. Again we are unaware of any evidence
of Ryser making \cjref{cj:hyper} in print, although it does appear
in an equivalent form in the thesis of his student  
J.\,R.~Henderson \cite[p.26]{Hen71}. It does not appear in [Rys67].

\bigskip

The following pages give a translation of [Rys67].  We took the
liberty of correcting a couple of clear and simple typos, but have
otherwise tried to stay true to the original. Some commentary has been
added in footnotes in \blue{blue}.

  \let\oldthebibliography=\thebibliography
  \let\endoldthebibliography=\endthebibliography
  \renewenvironment{thebibliography}[1]{%
    \begin{oldthebibliography}{#1}%
      \setlength{\parskip}{0.4ex plus 0.1ex minus 0.1ex}%
      \setlength{\itemsep}{0.4ex plus 0.1ex minus 0.1ex}%
  }%
  {%
    \end{oldthebibliography}%
  }

\newpage

\centerline{\Large H. J. Ryser: \ Recent Problems In Combinatorics}


\maketitle
 

\section{Latin Squares}
 
Let $n$ be a natural number, and $S$ be a set of $n$ distinct elements, such as the digits $1,2,\dots,n$. A Latin rectangle of order $r \times s$ ($r$, $s$ natural numbers) is an $r \times s$ matrix $[a_{ij}]_{r \times s}$ of elements of $S$ which has no repetition in any row or column:
$$a_{i,j} = a_{i,k} \implies j=k \qquad (i = 1, \dots , r),$$
$$a_{i,j} = a_{k,j} \implies i=k \qquad (j = 1, \dots , s).$$

Thus, in every Latin rectangle $r \leq n$ and $s \leq n$. Since this definition does not depend on the order of the digits, nor on the distinction between columns and rows, one can restrict oneself to the consideration of Latin rectangles where $s = n$ and the first row is $1, 2, \dots, n$ in natural order:
$$a_{1,j} = j \qquad (j = 1, \dots , s).$$
These rectangles are called normalized.

If $r = s = n$, then we call it a Latin square of order $n$. The square is called normalized if both the first row and the first column contain the numbers $1, 2, \dots, n$ in natural order.
$$a_{1,j} = j \qquad (j = 1, \dots , n).$$
$$a_{i,1} = i \qquad (i = 1, \dots , n).$$

Some examples of small normalized Latin squares are
$$\left[\begin{array}{c}1\end{array}\right] \qquad
\left[\begin{array}{cc}1&2\\2&1\end{array}\right] \qquad
\left[\begin{array}{ccc}1&2&3\\2&3&1\\3&1&2\end{array}\right]$$

This definition is now followed by the combinatorial question about the number of different objects defined. Obviously, you get only one normalized $1 \times n$ Latin rectangle. The number of different $2 \times n$ Latin rectangles\blue{\footnote{\blue{For this statement to be true the intention must have been for the first row to be in order, but the second row to be unrestricted.}}} is equal to $D_n$, the number of permutations without any fixed points (``derangements''); this is easy to calculate: 
$$D_n = n! \cdot \left(1 - \frac{1}{1!} + \frac{1}{2!} - \cdots + (-1)^n \frac{1}{n!} \right) \approx \frac{n!}{e}.$$

However, an explicit expression for the number of $3 \times n$ Latin rectangles is quite complicated, and for $4\times n$ Latin rectangles, the solution to the problem is unknown.

The question of the number of normalized Latin squares of order $n$, $\ell(n)$, is interesting. For a given number $n$, however, no explicit expression for $\ell(n)$ is known; Up to n = 8, $\ell(n)$ has been calculated ($n = 7$ by A. Sade, and $n = 8$ by Mark Wells with the computer MANIAC 
in 8 hours of computing time [appears in: J. of Combinatorial Theory]):\blue{\footnote{\blue{To date these numbers have been computed up to $n=11$. See\\
B.\,D.~McKay and I.\,M.~Wanless, On the number of Latin squares,
{\it Ann.\ Comb.} {\bf9} (2005), 335--344.\\
D.\,S. Stones, The many formulae for the number of {L}atin rectangles, 
Electron. J. Combin. {\bf17} (2010), \#A1.
}}}
$$\begin{array}{c|cccccccc}
n & 1 & 2 & 3 & 4 & 5 & 6 & 7 & 8 \\ \hline
\ell(n) & 1 & 1 & 1 & 1 & 56 & 9\,408 & 16\,942\,080 & 535\,281\,401\,856
\end{array}$$
It is noteworthy that in these small known values for $\ell(n)$ for $n \geq 4$, quite high powers of $2$ occur.\blue{\footnote{\blue{This same observation was 
independently made by Alter. It was subsequently proved that increasing 
powers of any prime (not just 2)
divide $\ell(n)$, although we are currently unable to explain the apparent 
rapid growth in the power of 2. For details, see:\\
R.\,Alter, How many Latin squares are there?
{\it Amer.\ Math.\ Monthly}, {\bf 82} (1975) 632--634.\\
D.\,S.~Stones and I.\,M.~Wanless, 
Divisors of the number of Latin rectangles,
{\it J. Combin. Theory Ser. A\/} {\bf117} (2010), 204--215.
}}}

Note that the $\ell(n)$ appear as coefficients of a certain power series related to MacMahon's ``Master Theorem'', which, however, is not suitable for the calculation of $\ell(n)$. 

Otherwise, quite little is known about Latin squares. Here are some known results:

\begin{theorem}[M. Hall]
 Every $r \times n$ Latin rectangle ($r \leq n$) can be completed to a Latin square of order $n$.
 \end{theorem}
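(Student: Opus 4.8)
The plan is to prove the slightly stronger statement that any $r \times n$ Latin rectangle with $r < n$ can be extended by appending one extra row so as to obtain an $(r+1) \times n$ Latin rectangle; the theorem then follows by applying this extension step repeatedly until $r = n$. Everything therefore reduces to the single-row extension.

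To add a new row to an $r \times n$ rectangle $[a_{ij}]$, I would keep track of the symbols still available in each column. For a column $j$, let $M_j \subseteq S$ be the set of symbols that do not yet appear in column $j$. Since the $r$ entries $a_{1j}, \dots, a_{rj}$ are distinct, each $M_j$ has exactly $n - r$ elements. A legal new row is precisely a choice of one symbol from each $M_j$ with no symbol repeated, that is, a system of distinct representatives (SDR) for the family $(M_1, \dots, M_n)$.

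The key step is to produce such an SDR, and the cleanest route is via the bipartite graph $G$ whose two colour classes are the columns $\{1, \dots, n\}$ and the symbols $S$, with an edge joining column $j$ to symbol $s$ whenever $s \in M_j$. I would then observe that $G$ is regular: each column $j$ has degree $|M_j| = n - r$, and each symbol $s$ also has degree $n - r$, because $s$ occurs exactly once in each of the $r$ existing rows and hence in exactly $r$ of the columns, leaving $n - r$ columns from which it is absent. A perfect matching in $G$ is exactly the SDR we want, and since a nonempty regular bipartite graph always has a perfect matching (equivalently, regularity forces Hall's condition to hold, so Hall's marriage theorem applies), such a matching exists whenever $r < n$.

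The only point requiring care, and the main obstacle, is the verification that Hall's condition holds, i.e.\ that every set of $k$ columns collectively meets at least $k$ symbols. This is exactly where the regularity is used: counting the edges leaving any $k$ columns, those columns emit $k(n-r)$ edges, while each symbol can receive at most $n-r$ of them, so the number $t$ of symbols they meet satisfies $k(n-r) \le t(n-r)$, giving $t \ge k$. Once the matching is in hand, reading off the matched symbol for each column yields a valid $(r+1)$-st row, and iterating the construction completes the rectangle to a Latin square of order $n$.
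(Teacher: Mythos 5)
Your proof is correct. It also starts exactly where the paper starts: you track, for each column, the set of symbols missing from that column (the paper calls these sets $S_i$, you call them $M_j$), and both arguments hinge on the same double count showing that the resulting column--symbol incidence structure is $(n-r)$-regular on both sides. Where you diverge is in how the rows are extracted from that structure. The paper does it in one shot: it forms the $n\times n$ zero-one incidence matrix $A$ of the missing symbols and invokes the Birkhoff--K\"onig theorem (quoted later in the paper as Theorem 6.1, together with the note that for zero-one matrices with equal line sums the decomposition has all coefficients equal to $1$) to write $A = P_1 + \cdots + P_{n-r}$ as a sum of permutation matrices, each $P_i$ supplying one of the $n-r$ new rows simultaneously; no induction on $r$ is needed. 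You instead add one row at a time: you find a single perfect matching via Hall's marriage theorem, verifying Hall's condition by the standard edge-counting argument ($k(n-r)$ edges leave any $k$ columns, at most $n-r$ land on each symbol), and then iterate. The trade-off is that the paper's argument is shorter once Birkhoff--K\"onig is available as a black box, while yours is more self-contained --- the Hall condition check is elementary, and you only ever need the existence of one matching rather than a full decomposition into disjoint ones. Indeed, since the permutation-matrix decomposition of a regular zero-one matrix is itself usually proved by repeatedly extracting one matching, your proof essentially inlines the proof of the lemma the paper cites.
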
 
 \begin{proof}
 We assume that the given rectangle as normalized. Let $S_i$ be the subset of $S$ that consists of the digits not occurring in the $i^{th}$ column ($i = 1,2, \dots, n$). From $S_1, S_2, \dots , S_n$, we define $A$ to be the incidence matrix defined by:
$$A = \left[\alpha_{ik}\right]_{n \times n} \qquad \text{where } \alpha_{ik} = \begin{cases} 1 & \text{if } k \in S_i \\ 0 & \text{otherwise.}\end{cases}$$
  
Because $|S_i| = n-r$, each row of $A$ contains exactly $n-r$ ones. By the definition of Latin rectangles, any given number occurs in exactly $r$ of the sets $S \setminus S_i~(i = 1, \dots, n)$. Thus, it occurs in exactly $n-r$ of sets $S_i$. Therefore, every column of $A$ contains exactly $n-r$ ones.
 
 However, according to the Birkhoff-K\"onig Theorem (later quoted), $A$ is the sum of $n-r$ permutation matrices $P_j$:
$$A=P_1 + \cdots + P_{n-r}.$$

If the $(r + i)^{th}$ row is now set equal to the permutation of the digits ($1, \dots,n$) given by $P_i$, i.e.,
$$\left[\alpha_{r+i,j}\right] = P_i \left[\begin{array}{c}1 \\ \vdots \\ n\end{array}\right]$$
in the sense of matrix multiplication, one obtains a Latin square. Obviously, there are no repetitions in the rows. Moreover, there are no repetitions in the columns, since the newly added digits of the column $i$ are exactly the elements of $S_i$.
 \end{proof}

{The following slightly generalized result also holds (here without a proof).}

\begin{theorem}
An $r \times s$ Latin rectangle can be completed to a Latin square of order $n$ if and only if $$N(i) \geq r + s-n \qquad \text{ for all } i=1,\dots,r,$$ where $N(i)$ is the number of times the symbol $i$ appears in the given $r \times s$ Latin rectangle.
\end{theorem}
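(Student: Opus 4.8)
The plan is to prove both directions, handling necessity by a direct counting argument and sufficiency by reducing to the already-established theorem of M.~Hall.

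For necessity, suppose the rectangle occupies the top-left corner of a Latin square $M$ of order $n$, and fix a symbol $i$. In $M$ the symbol $i$ occurs exactly once in each of the $n$ rows and once in each of the $n$ columns, hence $n$ times in total. I would partition the cells of $M$ into four blocks determined by the rectangle: the rectangle itself (rows $\le r$, columns $\le s$), where $i$ occurs $N(i)$ times; the block (rows $\le r$, columns $>s$), where it occurs $r-N(i)$ times, since each of the first $r$ rows contains exactly one $i$ and $N(i)$ of those lie in the first $s$ columns; the block (rows $>r$, columns $\le s$), where the same argument applied to columns gives $s-N(i)$; and the remaining block (rows $>r$, columns $>s$). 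Computing the count in this last block as $n$ minus the other three leaves $n-r-s+N(i)$ occurrences, and nonnegativity of this count is exactly $N(i)\ge r+s-n$.

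For sufficiency I would first extend the $r\times s$ rectangle to an $r\times n$ rectangle and then invoke M.~Hall's theorem to complete it to a square. To perform the extension all at once, form the bipartite ``availability'' graph $G$ on the $r$ rows and the $n$ symbols, joining row $i$ to symbol $j$ precisely when $j$ does not yet occur in row $i$. Every row then has degree $n-s$, while symbol $j$ has degree $r-N(j)$. The single place where the hypothesis enters is the bound $r-N(j)\le r-(r+s-n)=n-s$, so the maximum degree of $G$ is exactly $\Delta=n-s$. By K\"onig's edge-colouring theorem for bipartite graphs (a close relative of the Birkhoff--K\"onig decomposition already used above), the edges of $G$ decompose into $n-s$ colour classes, each a matching. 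Because every row has degree $n-s$ and there are $n-s$ classes, each row meets every colour exactly once; hence each colour class saturates all the rows, and therefore encodes a legal new column, its matched symbols being available and pairwise distinct. Appending these $n-s$ columns fills each row with exactly its missing symbols and produces an $r\times n$ Latin rectangle, to which M.~Hall's theorem applies to yield a Latin square of order $n$.

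The conceptual crux — and the only step where the inequality $N(i)\ge r+s-n$ is used — is the degree computation showing $\Delta=n-s$; once that is in hand, K\"onig's theorem does the rest uniformly across all $n-s$ new columns. I expect the main obstacle to be recognising that the extension should be posed as a single edge-colouring (equivalently, a simultaneous family of systems of distinct representatives) rather than as a column-by-column induction. A naive one-column-at-a-time approach forces one to preserve the counting condition at each step, which in turn requires the delicate argument that every ``critical'' symbol (one with $N(i)$ exactly equal to $r+s-n$) can be slotted into the next column; the all-at-once edge-colouring argument avoids this complication entirely.
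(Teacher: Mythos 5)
The paper states this theorem with no proof at all (``here without a proof''), merely noting that Theorem~1.1 is its special case $s=n$; so there is no argument of the paper's to compare yours against. What you have supplied is, in effect, the missing proof, and it is essentially Ryser's original 1951 argument: necessity by counting the occurrences of a symbol in the four blocks of the completed square, sufficiency by first extending the $r\times s$ rectangle to an $r\times n$ Latin rectangle and then invoking M.~Hall's completion theorem (Theorem~1.1). Both halves are sound: the count in the lower-right block is $n-r-s+N(i)\ge 0$, and the edge-colouring step is valid --- in the bipartite graph joining rows to their missing symbols, every row has degree $n-s$ and every symbol has degree $r-N(j)\le n-s$, so K\"onig's edge-colouring theorem yields $n-s$ matchings, each saturating the rows, i.e.\ $n-s$ legal new columns.

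One point deserves attention. As printed, the hypothesis reads ``$N(i)\ge r+s-n$ for all $i=1,\dots,r$'', but your sufficiency argument uses the inequality for \emph{every} symbol $j\in\{1,\dots,n\}$ (that is exactly where the degree bound $r-N(j)\le n-s$ comes from), and your necessity argument likewise establishes it for every symbol, not just the first $r$. This is not a defect of your proof but of the printed statement: with the quantifier restricted to $i\le r$ the theorem is false. For instance, take $n=5$ and the $2\times 4$ rectangle with rows $(1,2,3,4)$ and $(2,1,4,3)$: then $N(1)=N(2)=2\ge r+s-n=1$, yet the symbol $5$ never occurs, and in any completion it would have to occupy both cells $(1,5)$ and $(2,5)$ of the single remaining column, which is impossible. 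So the condition must be imposed for all $i=1,\dots,n$ --- the standard form of Ryser's theorem --- and that is precisely the statement your proof establishes; you should flag this discrepancy rather than silently prove the corrected version.
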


The above result follows from Theorem 1.1 for $s = n$.

The problem addressed can be made more general in the following way: Let $S$ be the set consisting of the numbers $1,\dots, n$ and the symbol \texttt{x} (``empty cell''). Given an $n \times n$ matrix of elements of $S$, what are the necessary and sufficient conditions for this matrix to be completed to a Latin square by substitution of each \texttt{x} by digits (not necessarily the same digits)? Above, we have dealt with a special case:
$$\left[
\begin{array}{cccc}\multicolumn{4}{c}{\texttt{<Digits>}} \\ \hline
\texttt{x} & \texttt{x} & \cdots & \texttt{x} \\
\vdots &  &  & \vdots \\
\texttt{x} & \texttt{x} & \cdots & \texttt{x} \\
 \end{array}\right]$$

{Hereinafter, a line of a matrix refers to either one of its rows or columns.}

Given a square matrix $\left[a_{ik}\right]$ of order $n$, a set $P = \{(i, k)\}$ of size $n$ is a path if no two of its elements share the same row or column:
$$\begin{array}{crlc}
(i, k) \in P&\implies&& (i, l) \notin P \text{ if } l \neq k\\ && \text{and }&(j, k) \notin P \text{ if } i \neq j.
\end{array}$$

A path $P$ is called a transversal if all digits from $1$ to $n$ occur among $\left[a_{ik}\right], \text{ for } (i,k) \in P$. For example, the main diagonal of each square matrix is a path, as is the back-diagonal. 
The main diagonal of the Latin square
$$\left[\begin{array}{ccc}1&2&3\\2&3&1\\3&1&2\end{array}\right]$$
is a transversal path, but its back-diagonal is not. The Latin square
$$\left[\begin{array}{cc}1&2\\2&1\end{array}\right]$$
does not have any transversal paths. The first example seems to be a special case of a more general fact; We formulate the following conjecture:

A Latin square of odd order always has a transversal
path\blue{\footnote{\blue{This is \cjref{cj:odd}}}}. This assumption
was verified in the case of $n = 5$ (H. J. Ryser). For general $n$, it
can easily be proved under the extra assumption that it is a symmetric
Latin square.

In this case, the main diagonal is always a transversal path. It
suffices to show that it contains a $1$, but this follows from the
fact that $1$ occurs exactly $n$ times in the square, and because of
the symmetry, $\frac{n - k}{2}$ times below the principal diagonal,
where $k$ is the number of the 1s on the main diagonal; $k$ cannot be
zero because $n$ is odd.

\section{Orthogonal Systems of Latin Squares}

Let $A_1$, $A_2$ be Latin squares of order $n \geq 3$. Form the ``superimposition square'' of the ordered pairs $\left[\left(a_{ij}^{(1)}, a_{ij}^{(2)}\right)\right]$ (Euler's ``Graeco-Latin'' squares). If the $n^2$ pairs in this square are all different, then $A_1$ and $A_2$ are called orthogonal.

For example,
$$
\left[\begin{array}{ccc}1&2&3\\2&3&1\\3&1&2\end{array}\right]
\quad \text{ and } \quad
\left[\begin{array}{ccc}1&2&3\\3&1&2\\2&3&1\end{array}\right]
$$
are orthogonal Latin squares. To find the second orthogonal square, the three transversal paths of the first are occupied by the same numbers. A set of Latin squares is called an orthogonal system of Latin squares when its elements are pairwise orthogonal.

The following shows that orthogonal systems cannot be too large.

\begin{theorem}
 For any orthogonal system $A_1, \dots , A_t$ $(A=[a_{ij}^{(\tau)}], \tau=1,\dots, t)$ of $t$ Latin squares of order $n$ ($t \geq 2$, $n \geq 3$), $$t \leq n-1.$$
 
 \begin{proof}
 Obviously, permuting the elements of the matrix does not change the orthogonality of squares, so we can assume that $A_1, \dots , A_t$ have the same first row consisting of the numbers $1, \dots, n$ in the natural order. The $a_{21}^{(\tau)} (\tau = 1,\dots,t)$ must now all be different, and they cannot be 1. From this, the theorem follows.
 \end{proof}
\end{theorem}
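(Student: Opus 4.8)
The plan is to exploit the freedom to relabel symbols in each square independently, and then to extract the bound from the entries in a single off-diagonal cell. First I would observe that orthogonality is unaffected if we apply an arbitrary permutation of the symbol set to any one of the squares: if $\pi$ is a permutation of $\{1,\dots,n\}$ and we replace $A_\tau$ by $[\pi(a_{ij}^{(\tau)})]$, then for any other square $A_\sigma$ the map $(x,y)\mapsto(\pi(x),y)$ is a bijection on ordered pairs, so the $n^2$ superimposed pairs remain pairwise distinct. Consequently I may choose, for each $\tau$, the permutation that sends the first row of $A_\tau$ to $1,2,\dots,n$, and so assume without loss of generality that every square is normalized to have first row $1,2,\dots,n$ in natural order.

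Next I would focus on the $t$ symbols $a_{21}^{(1)}, a_{21}^{(2)}, \dots, a_{21}^{(t)}$ appearing in cell $(2,1)$ across the squares. The claim is that these values are pairwise distinct and that none of them equals $1$. Granting both claims, they form a set of $t$ distinct elements of the $(n-1)$-element set $\{2,3,\dots,n\}$, and the bound $t\le n-1$ follows immediately by counting.

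For the two claims I would argue directly from the Latin and orthogonality properties. That $a_{21}^{(\tau)}\ne1$ holds because cell $(1,1)$ of $A_\tau$ already contains $1$, and column $1$ of a Latin square cannot repeat a symbol; this claim is what sharpens the count from $n$ to $n-1$. For distinctness, suppose toward a contradiction that $a_{21}^{(\sigma)}=a_{21}^{(\rho)}=c$ for some $\sigma\ne\rho$. In the superimposition of $A_\sigma$ and $A_\rho$ the pair $(c,c)$ then occurs in cell $(2,1)$. But because both first rows read $1,2,\dots,n$, the entry in column $c$ of each first row is $c$, so the pair $(c,c)$ also occurs in cell $(1,c)$; since these two cells lie in different rows they are distinct, and the repeated pair contradicts the orthogonality of $A_\sigma$ and $A_\rho$.

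I expect the only genuine subtlety to be the normalization step: one must check that relabeling symbols square-by-square really preserves \emph{pairwise} orthogonality, and that these relabelings can be performed simultaneously across all $t$ squares without interfering with one another. Once that reduction is secured the remainder is routine, and the heart of the argument is the single observation that a coincidence among the $a_{21}^{(\tau)}$ would force a repeated ordered pair at a cell off the first row.
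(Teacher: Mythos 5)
Your proposal is correct and follows exactly the argument in the paper: normalize all squares so their first rows read $1,\dots,n$, then observe that the entries $a_{21}^{(\tau)}$ must be pairwise distinct and different from $1$, giving $t\le n-1$. The paper states these steps tersely, and your write-up simply supplies the justifications (that relabeling preserves orthogonality, and that a coincidence at cell $(2,1)$ would repeat the pair $(c,c)$ already present at cell $(1,c)$) which the paper leaves implicit.
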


If $t = n-1$, then we call it a 
{complete system of orthogonal Latin squares}

\begin{theorem}
 For $n = p^a \geq 3$ (for $p$ prime) there exists a complete system of orthogonal Latin squares of order $n$ (proved by Veblen and Bassey).

 \begin{proof}
   Let $S = GF(p ^ a) = \left\{a_0 = 0, a_1 = 1, a_2, \dots, a_ {n-1}\right\}$. A complete system of orthogonal squares is defined by the fact that $A_e = \left[a_ {ij}^{(e)}\right]$ with $a_ {ij}^{(e)} = a_ea_i + a_j$. First, we verify that these are Latin squares:

  \begin{itemize}
   \item If elements are in the same row, then $$a_ea_i + a_j = a_ea_i + a_{j'}$$
from which $a_j = a_ {j'}$ and thus $j = j'$;

   \item If elements are in the same column, then $$a_ea_i + a_j = a_ea_ {i'} + a_j$$
and thus, $a_i = a_{i'}$ and $i = i'$.
  \end{itemize}
  
  \textbf{Proof of Orthogonality}
  
  Assume that
  $$\left(a_{ij}^{(e)}, a_{ij}^{(f)}\right) = \left(a_{i’j’}^{(e)}, a_{i’j’}^{(f)}\right)$$
with $e \neq f$ and $(i, j) \neq (i', j')$. From this would follow that
$$a_ea_i+a_j = a_ea_{i'}+a_{j'}$$ and $$a_fa_i+a_j = a_fa_{i'}+a_{j'}.$$

Subtraction of these equations give
$$a_i(a_e-a_f) = (a_e-a_f)a_{i'}, \qquad \text{so } a_i = a_{i'},$$
and consequently also $a_j = a_j'$, which contradicts the assumption.
 \end{proof}
\end{theorem}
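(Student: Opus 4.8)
The plan is to exhibit an explicit family of $n-1$ Latin squares built from the field $GF(n)$ and then verify that they are pairwise orthogonal; since the bound $t\le n-1$ from the previous theorem caps any orthogonal system of order $n$, such a family is automatically complete. First I would fix an enumeration $a_0=0,\,a_1=1,\,a_2,\dots,a_{n-1}$ of the elements of $GF(n)=GF(p^a)$, and for each \emph{nonzero} field element $a_e$ define an $n\times n$ array $A_e=[a_{ij}^{(e)}]$ by the affine rule $a_{ij}^{(e)}=a_ea_i+a_j$, with rows and columns indexed by the field elements. This produces exactly $n-1$ arrays, one for each nonzero choice of $a_e$, which already matches the target count (note $n\ge 3$ guarantees $n-1\ge 2$, so we genuinely have a system of at least two squares).

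The next step is to confirm that each $A_e$ really is a Latin square. Within a fixed row $i$ the entries are $a_j\mapsto a_ea_i+a_j$, a translate of the identity bijection $a_j\mapsto a_j$, so each field element appears exactly once; within a fixed column $j$ the entries are $a_i\mapsto a_ea_i+a_j$, and since $a_e\ne 0$ multiplication by $a_e$ is a bijection of the field, so again every element appears once. Hence no symbol repeats in any line.

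The heart of the argument is orthogonality. For $e\ne f$ I want the map $(i,j)\mapsto\bigl(a_{ij}^{(e)},a_{ij}^{(f)}\bigr)$ to be a bijection from the $n^2$ cells onto the $n^2$ ordered pairs, and by a counting argument it suffices to prove injectivity. If two cells give the same pair, then $a_ea_i+a_j=a_ea_{i'}+a_{j'}$ and $a_fa_i+a_j=a_fa_{i'}+a_{j'}$; subtracting cancels the $a_j$ terms and yields $(a_e-a_f)(a_i-a_{i'})=0$. Here is the only place the field structure is essential: because $a_e\ne a_f$, the factor $a_e-a_f$ is a nonzero element and hence invertible, forcing $a_i=a_{i'}$ and then $a_j=a_{j'}$, so the two cells coincide. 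That invertibility — equivalently the non-vanishing of $\det\bigl(\begin{smallmatrix}a_e&1\\a_f&1\end{smallmatrix}\bigr)=a_e-a_f$ — is the pivotal point and the step I would flag as the real content; everything else is routine bookkeeping. I would therefore expect no serious obstacle beyond making this determinant condition precise, since it is exactly what the field axioms supply.

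Finally I would assemble the pieces: the $n-1$ squares $A_1,\dots,A_{n-1}$ are pairwise orthogonal Latin squares of order $n$, and by the preceding upper bound no orthogonal system of order $n$ can exceed $n-1$ members, so this family is a complete system and the theorem follows.
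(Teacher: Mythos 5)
Your proposal is correct and takes essentially the same route as the paper: the identical affine construction $a_{ij}^{(e)} = a_e a_i + a_j$ over $GF(p^a)$, the same row/column cancellation for the Latin property, and the same subtraction argument using invertibility of $a_e - a_f$ for orthogonality. If anything you are slightly more careful than the original, since you state explicitly that $a_e$ must range over the \emph{nonzero} field elements (which the column check silently requires) and that the resulting count of $n-1$ squares is what makes the system complete.
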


A difficult question is the uniqueness of complete orthogonal systems. Uniqueness could be demonstrated up to order 8; For $n = 9$, it is unknown.\blue{\footnote{\blue{It has since been proved that there are exactly 4 projective planes of order 9, and that these correspond to 19 complete sets of MOLS of order 9. See\\
P.\,J. Owens and D.\,A. Preece.
Aspects of complete sets of {$9\times 9$} pairwise orthogonal Latin squares.
{\em Discrete Math.} {\bf 167/168} (1997), 519--525.
}}}

On the other hand, we know more about the existence of orthogonal (i.e., not complete) systems:

\begin{theorem}[MacNeish]
 Let $$n=p_1^{\alpha_1}\cdots p_N^{\alpha_N} (p_i\text{ prime}), \quad t = \min_{i=1,\dots,N} \left(p_i^{\alpha_i}-1\right) \geq 2.$$ Then there are $t$ pairwise orthogonal Latin squares of order $n$.
\end{theorem}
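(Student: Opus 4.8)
The plan is to reduce the general case to the prime-power case, by means of a product construction that builds mutually orthogonal Latin squares (MOLS) of order $mn$ out of MOLS of orders $m$ and $n$. Since $t = \min_i(p_i^{\alpha_i} - 1) \geq 2$ forces every factor $p_i^{\alpha_i} \geq 3$, the preceding theorem on prime-power orders supplies a complete system of $p_i^{\alpha_i} - 1 \geq t$ MOLS of order $p_i^{\alpha_i}$ for each $i$; discarding all but $t$ of them, I obtain $t$ MOLS of each prime-power order. It then suffices to show that $t$ MOLS of order $m$ together with $t$ MOLS of order $n$ yield $t$ MOLS of order $mn$, and to iterate this over the factorisation of $n$.

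The central object is the (Kronecker) product. Given Latin squares $A = [a_{ij}]$ of order $m$ and $B = [b_{kl}]$ of order $n$, I would index the rows and columns of a square of order $mn$ by pairs and define
$$(A \otimes B)_{(i,k),(j,l)} = (a_{ij}, b_{kl}),$$
so the symbol set is the product of the two symbol sets. Two things must be checked. First, that $A \otimes B$ is a Latin square: a repeat in a row $(i,k)$ would force $a_{ij} = a_{ij'}$ and $b_{kl} = b_{kl'}$, hence $j = j'$ and $l = l'$ by the Latin property of $A$ and $B$, and the column case is symmetric. Second, and this is the crux, that the construction respects orthogonality: if $A_1, A_2$ are orthogonal of order $m$ and $B_1, B_2$ are orthogonal of order $n$, then $A_1 \otimes B_1$ and $A_2 \otimes B_2$ are orthogonal of order $mn$.

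To verify this last point I would suppose the superimposition of $A_1 \otimes B_1$ and $A_2 \otimes B_2$ takes the same value at cells $((i,k),(j,l))$ and $((i',k'),(j',l'))$. Reading off the first coordinates gives $a_{ij}^{(1)} = a_{i'j'}^{(1)}$ and $a_{ij}^{(2)} = a_{i'j'}^{(2)}$, whence $(i,j) = (i',j')$ by orthogonality of $A_1, A_2$; the second coordinates give $(k,l) = (k',l')$ by orthogonality of $B_1, B_2$. Thus the two cells coincide, so all $(mn)^2$ ordered pairs are distinct and the product squares are orthogonal. Applying this coordinatewise, the $t$ products $A_1 \otimes B_1, \dots, A_t \otimes B_t$ form $t$ MOLS of order $mn$; iterating over $p_1^{\alpha_1}, \dots, p_N^{\alpha_N}$ then gives $t$ MOLS of order $n$. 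The only real obstacle is bookkeeping: one must confirm that \emph{pairwise} orthogonality across the whole system, not merely orthogonality of a single pair, survives each step, which the argument above delivers since it applies verbatim to any two distinct indices.
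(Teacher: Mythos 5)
Your proof is correct, and the construction at its heart --- the product square $(A\otimes B)_{(i,k),(j,l)} = (a_{ij}, b_{kl})$, applied to the prime-power systems furnished by the preceding theorem and iterated over the factorisation --- is exactly MacNeish's product, which is also what the paper does. The difference is architectural: the paper never checks orthogonality on the product squares directly. It first proves (Lemma 2.1) that a system of $t$ orthogonal Latin squares of order $n$ is equivalent to an $n^2\times(t+2)$ schema (in modern language, an orthogonal array) in which every pair of columns contains all $n^2$ ordered pairs; it then forms the product at the level of schemas (Lemma 2.2), setting $b^*_{(i,i'),j} = \left(b_{ij}, b'_{i'j}\right)$, where the required all-pairs property of the product is essentially immediate, and translates back through Lemma 2.1 to get the $t$ squares of order $nn'$. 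Your direct route replaces that machinery with two hands-on verifications --- the Latin property by projecting to each coordinate, and pairwise orthogonality by using orthogonality of each factor pair to force $(i,j)=(i',j')$ and $(k,l)=(k',l')$ --- and both are sound, including the bookkeeping remark that the argument applies verbatim to every pair of distinct indices. What the paper's formalism buys is uniformity: in the schema, the row-index, column-index and symbol columns play symmetric roles, so the Latin and orthogonality conditions collapse into the single condition on pairs of columns, the product verification becomes one sentence, and the same schema correspondence is reused in Section 3 to pass between complete systems of orthogonal squares and projective planes. One small point in your favour: you note explicitly that $t\geq 2$ forces each $p_i^{\alpha_i}\geq 3$, so the prime-power theorem (stated only for $n=p^a\geq 3$) really does apply to every factor; the paper leaves this hypothesis check implicit.
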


To prove of this theorem, we need the following two lemmas:

\begin{lemma}
A system of $t$ orthogonal Latin squares $A_1, \dots, A_t$ corresponds one-to-one to the following schema

\vskip5pt

\begin{tikzpicture}
\draw (-1.5,3.4) rectangle (1.6,-3.4);

\matrix [matrix of nodes,row sep=-\pgflinewidth]
{
1 & 1 & \qquad~ & \qquad & \dots \\
1 & 2 & \quad~ & \quad & \dots \\
\vdots & \vdots & \quad & \quad & \dots \\
1 & n & \quad & \quad & \dots\\
\\
2 & 1 & \quad~ & \quad & \dots \\
2 & 2 & \quad~ & \quad & \dots \\
\vdots & \vdots & \quad & \quad & \dots \\
2 & n & \quad & \quad & \dots\\
. & . & . & . & \dots \\
n & 1 & \quad~ & \quad & \dots \\
n & 2 & \quad~ & \quad & \dots \\
\vdots & \vdots & \quad & \quad & \dots \\
n & n & \quad & \quad & \dots\\
};

\node [draw] at (-0.3,2.2) {\rotatebox{90}{{\small Row 1 of $A_1$}}};
\node [draw] at (0.4,2.2) {\rotatebox{90}{{\small Row 1 of $A_2$}}};

\node [draw] at (-0.3,0.1) {\rotatebox{90}{{\small Row 2 of $A_1$}}};
\node [draw] at (0.4,0.1) {\rotatebox{90}{{\small Row 2 of $A_2$}}};

\node [draw] at (-0.3,-2.25) {\rotatebox{90}{{\small Row $n$ of $A_1$}}};
\node [draw] at (0.4,-2.25) {\rotatebox{90}{{\small Row $n$ of $A_2$}}};

\end{tikzpicture}
\\\noindent which has the property that every $n^2 \times 2$ submatrix contains all $n^2$ pairs formed from the numbers $1, \dots, n$.

 \begin{proof}
  The $A_\tau$ ($\tau = 1, \dots, t$), as indicated in the scheme, form a system of orthogonal Latin squares: Latin because from the form of the $1^{st}$ and $2^{nd}$ column of the scheme, it follows that there are no two identical digits in a row or column of $A_\tau$; Orthogonal because the elements of $A_\tau$ and $A_ {\tau'}$ form an $n^2 \times 2$ matrix in the schema, and therefore all $(A_\tau,A_{\tau'})$ pairs are different. This superposition, combined in the manner indicated with the $n^2 \times (t + 2)$ scheme, has the property in question.
 \end{proof}
\end{lemma}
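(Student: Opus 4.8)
The plan is to exhibit the two maps that make up the claimed bijection and to check that each is well defined and that they are mutually inverse. In one direction, given an orthogonal system $A_1,\dots,A_t$, I build the $n^2\times(t+2)$ array whose row indexed by the cell $(i,j)$ is
$$\bigl(i,\ j,\ a_{ij}^{(1)},\ a_{ij}^{(2)},\ \dots,\ a_{ij}^{(t)}\bigr),$$
with the rows listed in the lexicographic order of $(i,j)$ displayed in the schema. In the other direction, given any array with this fixed row-ordering, I recover $A_\tau$ by setting $a_{ij}^{(\tau)}$ equal to the entry in column $\tau+2$ of the unique row whose first two entries are $i$ and $j$. Since the first two columns are pinned down to enumerate every pair $(i,j)$ exactly once, these two maps are visibly inverse to one another, so the entire content is to show that the schema property and the orthogonal-system property correspond under them.

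For the schema property I would examine all $\binom{t+2}{2}$ choices of two columns, grouped into three cases. Columns $1$ and $2$ contribute each pair $(i,j)$ exactly once by construction, so that case is automatic. For column $2$ paired with column $\tau+2$ the entries are $(j,a_{ij}^{(\tau)})$; fixing $j$ and letting $i$ range, the Latin (column) property of $A_\tau$ says $a_{ij}^{(\tau)}$ runs through all of $1,\dots,n$, so these pairs cover $\{j\}\times\{1,\dots,n\}$, and ranging over $j$ yields all $n^2$ pairs. The pairing of column $1$ with column $\tau+2$ is identical but invokes the Latin (row) property. Finally, columns $\tau+2$ and $\tau'+2$ with $\tau\neq\tau'$ produce the pairs $(a_{ij}^{(\tau)},a_{ij}^{(\tau')})$, and these are all distinct—hence exhaust the $n^2$ pairs—precisely because $A_\tau$ and $A_{\tau'}$ are orthogonal.

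The reverse implication runs the same three cases backwards: from a valid schema, the column pair $(1,\tau+2)$ realizing every pair forces each row of $A_\tau$ to contain every symbol, the pair $(2,\tau+2)$ forces each column of $A_\tau$ to contain every symbol (so each $A_\tau$ is Latin), and the pair $(\tau+2,\tau'+2)$ realizing all $n^2$ pairs forces the superimposition of $A_\tau$ and $A_{\tau'}$ to have distinct entries, i.e.\ orthogonality. I do not expect a genuine obstacle here; the one point needing care is the bookkeeping. Because the schema has exactly $n^2$ rows and there are exactly $n^2$ possible ordered pairs, "every pair appears" in an $n^2\times 2$ block is equivalent, by a counting argument (a surjection between equinumerous finite sets is a bijection), to "every pair appears exactly once." Matching each of the three types of column pair to the correct one of the row-Latin, column-Latin, and orthogonality conditions is the whole of the argument.
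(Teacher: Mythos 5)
Your proof is correct and follows essentially the same route as the paper's: interpret the first two columns of the schema as cell coordinates, the remaining columns as the entries of the $A_\tau$, and match each type of column pair to the row-Latin, column-Latin, or orthogonality condition. The paper's own argument is merely a terse sketch of one direction with the converse asserted in a single sentence, so your version simply fills in the bookkeeping (both directions, the three cases, and the equinumerosity observation that ``all pairs appear'' equals ``each pair appears exactly once'') that the paper leaves implicit.
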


\begin{lemma}
 If there are $t$ orthogonal Latin squares of order $n$ and $t$ of order $n'$, then there are also $t$ orthogonal Latin squares of order $n \cdot n'$.
 
 \begin{proof}
   According to Lemma 2.1, we only need to consider the corresponding $n^2 \times (t + 2)$ and $n'^2 \times(t + 2)$ schema $B = (b_{ij})$ and $B'= (b'_{kl})$. From these, we form an $(n \cdot n')^2 \times (t+2)$ scheme $B^*$ by setting
$$b^*_{(i,i'),j} = \left(b_{ij},b'_{i',j}\right) \text{ where }\begin{cases}(i,i') &= (1,1), \dots , (n,n'),\\j&=1,\dots,t+2,\end{cases}$$
according to which the elements of this new scheme are now the pairs formed by the numbers $1,\dots,n$ and $1,\dots,n'$.

 The new scheme also has the property that every $(n \cdot n')^2 \times 2$ submatrix contains all $(n\cdot n')^2$ pairs from $(1,1), \dots, (n, n')$. So by Lemma 2.1, the result follows.
 \end{proof}
\end{lemma}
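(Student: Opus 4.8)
The plan is to use Lemma~2.1 to translate the statement into one about ``schemas'' and then to combine the two schemas by a direct-product construction. Concretely, I would first invoke Lemma~2.1 to replace the given system of $t$ orthogonal Latin squares of order $n$ by its associated $n^2\times(t+2)$ schema $B=(b_{rj})$, and the system of order $n'$ by its associated $n'^2\times(t+2)$ schema $B'=(b'_{r'j})$. The advantage of passing to schemas is that the distinction between the ``Latin'' and the ``orthogonal'' conditions disappears: both are subsumed in the single property that, for every choice of two of the $t+2$ columns, the corresponding $2$-column submatrix contains each relevant ordered pair exactly once.

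Next I would form a product schema $B^{*}$ whose rows are indexed by pairs $(r,r')$, with $r$ a row of $B$ and $r'$ a row of $B'$, and whose entry in column $j$ is the ordered pair $\big(b_{rj},\,b'_{r'j}\big)$. Thus $B^{*}$ has $n^2\cdot n'^2=(nn')^2$ rows and $t+2$ columns, and its entries are pairs drawn from $\{1,\dots,n\}\times\{1,\dots,n'\}$, which I identify with a symbol set of size $nn'$. If I can show that $B^{*}$ again satisfies the schema property of Lemma~2.1 --- that every two of its columns jointly realize all $(nn')^2$ pairs of symbols --- then applying Lemma~2.1 in the reverse direction produces a system of $t$ orthogonal Latin squares of order $nn'$, as required.

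The heart of the argument is verifying this coverage property, and the key observation is that it factorizes. Fix two columns $j$ and $k$. The pair of $B^{*}$-entries in row $(r,r')$ is $\big((b_{rj},b'_{r'j}),(b_{rk},b'_{r'k})\big)$, which is completely determined by the pair $(b_{rj},b_{rk})$ coming from $B$ together with the pair $(b'_{r'j},b'_{r'k})$ coming from $B'$. By the schema property of $B$, as $r$ ranges over all $n^2$ rows the pair $(b_{rj},b_{rk})$ runs through every element of $\{1,\dots,n\}^2$ exactly once; likewise for $B'$. Since $r$ and $r'$ range independently, the combined data runs through every element of $\{1,\dots,n\}^2\times\{1,\dots,n'\}^2$ exactly once, and this set is in natural bijection with the $(nn')^2$ ordered pairs of symbols from $\{1,\dots,n\}\times\{1,\dots,n'\}$. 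Hence every pair of symbols occurs exactly once in columns $j,k$ of $B^{*}$, which is the desired coverage.

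I do not expect a serious obstacle: the construction is a routine Kronecker-type product, and once Lemma~2.1 is in hand the only real content is the factorization above. The part that most needs care is purely bookkeeping --- choosing a consistent indexing of the product rows by pairs $(r,r')$, keeping track of which two columns play the role of row- and column-indices after identifying $\{1,\dots,n\}\times\{1,\dots,n'\}$ with a set of $nn'$ symbols, and making sure the independence of $r$ and $r'$ is genuinely being used rather than merely assumed. These are exactly the details that the symmetric, ``all pairs of columns'' formulation of Lemma~2.1 is designed to render painless.
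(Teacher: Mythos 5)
Your proposal is correct and follows essentially the same route as the paper: both pass to the $n^2\times(t+2)$ and $n'^2\times(t+2)$ schemas via Lemma~2.1, form the same Kronecker-type product schema with entries $\bigl(b_{rj},b'_{r'j}\bigr)$, and invoke Lemma~2.1 in reverse. The only difference is that you spell out the factorization argument verifying the two-column coverage property, which the paper simply asserts.
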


The theorem follows by the application of the preceding lemma and multiple applications of Lemma 2.2.

Theorem 2.3 says nothing in the case $n = 2u$ when $u$ is odd; In all other cases, it provides the existence of at least two orthogonal Latin squares.

Euler even suggested that in this case ($n \equiv 2 \mod 4$), no pair of orthogonal Latin squares exist. This assumption is trivially correct for $n = 2$ and has also been verified for the next case of $n = 6$ (Tarry, 1900). All the more surprising is the result of Bose, Shrikhande and Parker (1960), who says that these cases are the only ones in which Euler's conjecture is true:

\begin{theorem}There are pairs of orthogonal Latin squares of each order $n \neq 2$ and $n \neq 6$.\end{theorem}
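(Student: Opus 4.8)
The plan is to split the problem according to the residue of $n$ modulo $4$, using MacNeish's theorem (Theorem 2.3) to dispose of the easy residues and reserving genuinely new constructions for the one hard residue. First I would observe that the MacNeish bound $t=\min_i(p_i^{\alpha_i}-1)$ already settles every order $n\ge 3$ with $n\not\equiv 2\pmod 4$. Indeed, if $n$ is odd then every prime-power factor $p_i^{\alpha_i}$ is at least $3$; and if $4\mid n$ then the $2$-part is $2^{\alpha}$ with $\alpha\ge 2$, again at least $4$. In either case $\min_i(p_i^{\alpha_i}-1)\ge 2$, so Theorem 2.3 supplies two (in fact more) pairwise orthogonal Latin squares, hence an orthogonal pair. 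The only orders on which MacNeish's bound collapses to $t=1$ are those with $n\equiv 2\pmod 4$, where the factor $2^1$ forces some $p_i^{\alpha_i}-1=1$. Thus the content of the theorem is concentrated entirely in the orders $n=2,6,10,14,18,22,\dots$, and the two excluded values $n=2$ and $n=6$ are legitimately omitted: these are exactly the cases where Euler's conjecture is true, $n=2$ trivially and $n=6$ by the exhaustive search of Tarry mentioned above.

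It then remains to construct an orthogonal pair for every $n\equiv 2\pmod 4$ with $n\ge 10$. Here the product construction of Lemma 2.2 is useless: if $n\equiv 2\pmod 4$ then any factorisation $n=ab$ into factors $>1$ has exactly one factor $\equiv 2\pmod 4$, so starting from prime-power ingredients one always inherits a factor (such as $2$ or $6$) admitting no orthogonal mate, and Lemma 2.2 never produces more than $t=1$ squares. A genuinely different, additive rather than multiplicative, construction is needed. The approach I would take is to build the square of order $n$ from a combinatorial design laid out on an $n$-element point set: choose a suitable pairwise balanced (or group-divisible) design on $n$ points all of whose blocks have sizes $k$ that already admit an orthogonal pair (prime powers, or orders previously handled), install a local orthogonal pair of order $k$ on each block, and amalgamate these into a single global pair. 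Orthogonality of the assembled pair then reduces to orthogonality of the local pairs together with the defining balance property of the design --- that every pair of points lies in exactly one block --- which forces every ordered symbol pair to be realised exactly once.

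The hard part, and the reason the problem stood open for roughly $180$ years, lives entirely in this last step and is twofold. First, one must exhibit the seed examples that cannot be reached from prime powers by products at all, most famously an orthogonal pair of order $10$ (Parker) and the order-$22$ construction of Bose and Shrikhande; these require explicit ad hoc combinatorial objects --- difference families, and orthogonal arrays extracted from balanced incomplete block designs --- rather than the clean field arithmetic used for prime powers in Theorem 2.2. Second, one must show that for every residue class $\equiv 2\pmod 4$ there is an infinite supply of admissible designs whose block sizes avoid $2$ and $6$, so that the finitely many seeds propagate to cover all sufficiently large $n$, and then patch the remaining small orders by hand. I would therefore expect the design-existence and seed-construction steps --- not the reduction modulo $4$, which is routine --- to be the true obstacle: establishing that the assembled pair is orthogonal in full generality, and that suitable master designs exist for every residue, is where essentially all of the difficulty resides.
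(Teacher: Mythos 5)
The first thing to note is that the paper contains no proof of this theorem to compare against: Ryser explicitly declines to reproduce the Bose--Shrikhande--Parker argument ``because of its complexity,'' referring the reader to his monograph [1] for the more elegant refutation of Euler's conjecture in the case $n \equiv 10 \pmod{12}$. Your opening reduction is correct, but it is also exactly what the paper already records in the sentence preceding the theorem: Theorem 2.3 gives at least two orthogonal squares whenever $n \not\equiv 2 \pmod 4$, since then every prime-power factor satisfies $p_i^{\alpha_i} - 1 \ge 2$, so the entire content of the theorem lies in the orders $n \equiv 2 \pmod 4$ with $n \ge 10$. From that point on, what you have written is a programme rather than a proof: the seed constructions (Parker's pair of order $10$, the Bose--Shrikhande examples) and the existence of suitable designs for every relevant order are precisely the parts you defer, and they constitute essentially all of the theorem. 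Acknowledging that these are ``the true obstacle'' does not discharge them.

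There is also a concrete mathematical error in the one step you do describe. In a pairwise balanced design, every \emph{pair} of points lies in exactly one block, but each individual point lies in many blocks, so the subsquares you propose to install overlap on their diagonal cells. Installing an arbitrary orthogonal pair of order $k$ on each block therefore does not even produce a well-defined Latin square, let alone an orthogonal pair. The classical repair is to demand that each local pair be \emph{idempotent} (both squares fix the diagonal), which makes the overlaps consistent: every off-diagonal cell is then governed by the unique block containing its row and column indices, and both Latin-ness and orthogonality can be verified. But with that repair your hypothesis on block sizes is no longer adequate: no pair of idempotent orthogonal Latin squares of order $3$ exists (the idempotent Latin square of order $3$ is unique), so the admissible block sizes must avoid $2$, $3$ and $6$, not merely $2$ and $6$ as you assert, and the design-existence problem has to be solved under this stronger restriction. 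So even as an outline of the historical proof, the proposal omits the coherence condition on which the amalgamation depends and misstates the class of designs that would be needed.
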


We do not give the proof here again because of its complexity, but a refutation of Euler's assumption is already contained in the more elegant proof for the case $n \equiv 10 \mod 12$ (see [1]).

\section{Projective Planes}

Let $n \geq 3$. From the standpoint of combinatorics, one can introduce projective planes as follows:

\begin{definition} A projective plane of order $n$ is given by a complete system of orthogonal Latin squares of order $n$.\end{definition}

Equivalent to this is a square zero-one matrix $A$ of order $m=n^ 2 + n + 1$ with $AA^T = nI + J$, where $$I = (\delta_ {ij}) _ {m \times m} \quad \text{ and } \quad J = (1) _ {m \times m}.$$
This is because they give a complete system of orthogonal Latin squares of order $n$; According to Lemma 2.1, we can assign a $n^2 \times (n + 1)$ scheme to these rows. The rows of this scheme are $Z_1, \dots, Z_{n^2}$. 
{Moreover, let $Z_{n^2+1}, \dots, Z_{n^2+n+1}$ be additional (pairwise distinct) symbols}, and denote by $G_{ij}$ $(i = 1,\dots,n ; j = 1,\dots,n + 1)$ the set of all $Z_k$ with $i$ in the $j$th column, combined with $Z_{n^2+j}$. By assumption, all $G_{ij}$ are different.

Finally, let $G$ denote the set of symbols $Z_{n^2+1}, \dots , Z_{n^2 + n + 1}$.

Let $A$ be the $(n^2+n+1) \times (n^2+n+1)$ incidence matrix of the
$Z_k$ with respect to the $G$'s. Then it is clear that every $G$
contains exactly $n+1$ $Z$'s, and every $Z$ is contained in precisely
$n+1$ $G$'s, which follows the matrix equation for $A$. Geometrically
speaking, the $Z$'s are the points and the $G$'s lines.

Conversely, assume $A$ is given with the above-mentioned
properties. We interpret $A$ as the incidence matrix of $n^2+n+1$
``points'' $Z_j$ and the same number of ``lines'' $G_k$. For example,
for a fixed line, $G_1$, the points $P_1,\dots, P_{n+1}$ are on
it, $Q_1,\dots, Q_{n^2}$ being the points which are not on it.

As follows from the matrix equation, exactly $n+1$ lines extend
through each point. The $n$ lines through $P_j$, different
from $G_1$, may be assigned the numbers $1,\dots,n$, for every
$j$. Thus, all lines through $G_1$ have a number $\in \{1,\dots,n\}$.

Let $C = [C_{ij}] = [\text{number of } \overline{QiPj}]$. Exactly one
line goes through each pair of points, as follows directly from the
matrix equation. It is now easy to verify that $C$ is a schema of the
desired shape from Lemma 2.1. From it, $n-1$ pairs of orthogonal Latin
squares can be deduced.

The combinatorial problem that is associated with this is the
determination of for which $n$ projective planes of order $n$
exist. For $n = p^\alpha$ with $p$ prime, the existence of a
projective plane of this order follows from Theorem 2.2. The smallest
$n$ for which this problem is not known is $n = 10$. \blue{\footnote{\blue{The existence
question for $n=10$ was subsequently settled in the negative. See\\
C.\,W.\,H. Lam, 
The search for a finite projective plane of order 10,
Amer. Math. Monthly 98 (1991), 305--318.\\
C.\,W.\,H. Lam, 
L. Thiel and S. Swiercz, 
The nonexistence of finite projective planes of order 10, 
Canad. J. Math. {\bf41} (1989), 1117--1123.}}} Here, even three
pairs of orthogonal squares could not be found (in spite of Parker's
extensive computer calculations). Parker obtained by means of
unsystematic searching 84 Latin squares of order 10. Of these, 33 had
no orthogonal `partners', 29 had exactly one, 15 had two, 2 had
three, 3 had four and 2 had five orthogonal partners. It appears that
about 50\% of all randomly generated squares have orthogonal 
partners.\blue{\footnote{\blue{This comment applies to order 10 only.
The proportion 
was estimated at a touch over 60\%, in\\
B.\,D.~McKay, A.~Meynert and W.~Myrvold, Small latin squares,
quasigroups and loops, {\it J.\ Combin.\ Des.} {\bf15} (2007), 98--119.}}}

We are still giving an idea that is useful in this context. In the Latin square
$$\left[\begin{array}{ccc}1&2&3\\2&3&1\\3&1&2\end{array}\right],$$
one finds three transversal paths, which are symbolically added by the three matrices
$$
\left[\begin{array}{ccc}1&0&0\\0&1&0\\0&0&1\end{array}\right], \quad
\left[\begin{array}{ccc}0&1&0\\0&0&1\\1&0&0\end{array}\right], \quad
\left[\begin{array}{ccc}0&0&1\\1&0&0\\0&1&0\end{array}\right]
$$
which add up to $J = (1)_{3\times3}$: the transversal paths are pairwise disjoint. If the first transversal path is occupied by $1$, the second by $2$, etc., one obtains an orthogonal partner to the given square.

It holds true that any Latin square of order $n$ has an orthogonal partner if it has $n$ pairwise disjoint transversal paths.

\textbf{Example:} 
$$
\left[
\begin{array}{cccccccccc}
\fbox{5}&1&\fbox{7}&3&4&\fbox{0}&6&\fbox{2}&8&9\\
1&2&3&4&5&6&7&8&9&0\\
\fbox{7}&3&4&5&6&\fbox{2}&8&9&0&1\\
3&4&5&6&7&8&9&0&1&2\\
4&5&6&7&8&9&0&1&2&3\\
\fbox{0}&6&\fbox{2}&8&9&\fbox{5}&1&\fbox{7}&3&4\\
6&7&8&9&0&1&2&3&4&5\\
\fbox{2}&8&9&0&1&\fbox{7}&3&4&5&6\\
8&9&0&1&2&3&4&5&6&7\\
9&0&1&2&3&4&5&6&7&8\\
\end{array}
\right]
$$
is a Latin square of order $10$ (found by Parker). If you swap all 0 and 5 which are highlighted, as well as all highlighted 2 and 7, a cyclic Latin square (with even order) is obtained, which, as is easily verified, cannot possess a transversal path, and consequently has no orthogonal partner. On the other hand, the given square has 5504 transversal paths and approximately $10^6$ orthogonal partners.\blue{\footnote{\blue{Ryser is quoting Parker's estimate here, but the true figure
is 12\,265\,168. See\\
B.\,M.~Maenhaut and I.\,M.~Wanless, Atomic Latin
squares of order eleven, {\it J.\ Combin.\ Designs\/}, {\bf 12}
(2004), 12--34.
}}}

The above representation of Latin squares mean that $n$ zero-one matrices which indicate the position of the transversal paths can be further refined. To form a given Latin square $[a_{ij}]$ form the $n$ matrices $$[d_ {ijk}] \text{ with } d_{ijk} = \begin{cases}1 & \text{for }a_{ij}=k \\ 0 & \text{otherwise.}\end{cases}$$

These form a so-called Latin cube, a three-dimensional zero-one matrix, which has exactly one 1 in every (axis-parallel) line. This output square has an orthogonal partner exactly if the cube can be decomposed into other cubes with a 1 in each axis-parallel plane. (See articles by Ryser and Jurkat, published in the J. of Algebra).\blue{\footnote{\blue{In particular, see
``Extremal configurations and decomposition theorems I'',
{\it J. Algebra} {\bf8} (1968) 194--222.}}}

\section{Two combinatorial results for zero-one matrices}

On the basis of the following two problems, two characteristic conclusions of combinatorics are to be presented.

\begin{problem}
Let $A = (a_{ij})$ be a zero-one matrix of order $m \times n$ such that every element of $A^TA$ is positive and $A$ has no sub-matrix of the form
$$
\left[\begin{array}{ccc}0&1&1\\1&0&1\\1&1&0\end{array}\right]
$$
after permuting rows and columns. Show that $A$ has a row of all ones.
\end{problem}

\begin{problem} Let $A$ be a symmetric zero-one matrix of order $v$ such that
$$
\begin{array}{rl}
 & AA^T = (k-\lambda) I + \lambda J, \\
\text{where} & I = (\delta_{ij})_{v \times v}, \quad J = (1)_{v \times v} \\
\text{and} & 0 < \lambda < k  <v-1.
\end{array}
$$

Assuming that $k-\lambda$ is not a square, show that $A$ has exactly $k$ ones on the main diagonal. (This problem occurs with $(b, k, \lambda)$ configurations, see [1]).
\end{problem}

\textbf{Proof of Problem 1:}
We introduce the proof by induction over the number of columns. For $n = 1$ and $n = 2$, the assertion is true. So let $n \geq 3$.

Let $B$ denote the matrix which is formed by removing the first column from $A$. Obviously, $B^TB$ is a submatrix of $A^TA$ and thus, has only positive elements. This implies, according to the induction requirement, that B has a row of ones; For example, the first line of B.

If $a_ {11} = 1$, the theorem is proved. Therefore, we consider only the non-trivial case $a_ {11} = 0$.
Let us repeat the statements which have just been applied, omitting the second column instead of the first column of $A$, the row of all ones must be in the last $m-1$ lines. Thus, we obtain that $A$ has the following form (up to permutation of rows and columns):
$$
\left[
\begin{array}{cccccc}
0 & 1 & 1 & 1 & \dots & 0 \\
1 & 0 & 1 & 1 & \dots & 0 \\
1 & 1 & 0 & 1 & \dots & 0 \\
. & . & . & . & \dots & .
\end{array}
\right]
$$
This matrix has a submatrix of the forbidden form, and consequently, the unfavourable case could never occur.

\textbf{Proof of Problem 2:}
We examine the spectrum $\{\lambda_1, \dots , \lambda_v\}$ of $A$.

From the matrix equations, it follows that $A$ has exactly $k$ ones in each row; Hence, $\lambda_1 = k$ is an eigenvalue of $A$ (with the eigenvector $\left(\begin{array}{ccc}1&\cdots&1\end{array}\right)^T$).

Because $AA^T = A^2$, $(k-\lambda) I + \lambda J$ has eigenvalues $\lambda_1^2, \lambda_2^2, \dots, \lambda_v^2$. Now, however, $J$ has only the eigenvalue $v$ which is different from $0$, since it has rank $1$. Therefore, the remaining eigenvalues of $AA^T$ are all $k-\lambda$. I.e., the spectrum of $A$ is $= \{k, \sqrt{k-\lambda}, -\sqrt{k-\lambda}\}$.

And so applies to the trace:
$$\tr(A) = k + a \sqrt{k-\lambda} - b \sqrt{k-\lambda} \quad \text{ with integers } a, b.$$
But since $\tr(A)$ must be an integer and $k-\lambda$ is not a square,
$a = b$. The trace of A is thus $= k$.

\section{The Marriage theorem}

In this section, we prove two related propositions.

\begin{theorem}[K\"onig-Egerv\'ary] 
Let $A$ be a zero-one matrix of order $m \times n$. The minimum number
of the lines containing all of the ones in $A$ is equal to the maximum
number of ones of which no two share a line.
\end{theorem}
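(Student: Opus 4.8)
The plan is to prove the two inequalities separately and combine them. Write $\nu$ for the maximum number of ones of $A$ no two of which share a line (a maximum partial path, in the terminology of Section~1) and $\tau$ for the minimum number of lines covering all the ones of $A$; the claim is $\nu=\tau$. For the easy direction $\nu\le\tau$, I would fix a maximum set $M$ of $\nu$ pairwise non-collinear ones and any covering set $L$ of lines. Since no two ones of $M$ lie in a common line, each line of $L$ contains at most one element of $M$; as $L$ covers every one of $A$ it in particular covers all of $M$, forcing $|L|\ge|M|$, and hence $\tau\ge\nu$.

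The substantive direction is $\tau\le\nu$, which I would establish by building a cover of size $\nu$ from a maximum partial path $M$ via an alternating-path (Hungarian) construction. Call a row or column \emph{saturated} if it meets $M$; note that each saturated column meets exactly one one of $M$, since $M$ has no two ones in a line. Starting from each unsaturated row, I would grow alternating sequences of ones: leave a row along a one \emph{not} in $M$ to reach a column, leave that column along the unique one of $M$ in it to reach a new row, and repeat. Let $Z$ be the set of all rows and columns reachable in this way. The crucial observation is that no such sequence can terminate at an unsaturated column, for otherwise toggling membership in $M$ along the sequence would produce a strictly larger partial path, contradicting the maximality of $M$.

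With $Z$ in hand I would take as my cover $K$ the columns lying in $Z$ together with the rows \emph{not} lying in $Z$, and verify coverage and size. For coverage: if a one $(i,j)$ has its row $i\in Z$, then either $(i,j)\in M$ (whence $j\in Z$, as $i$ was reached through column $j$) or $(i,j)\notin M$ (whence the alternating rule sends $j\in Z$); in both cases the column $j\in Z$ covers it, while any one whose row is not in $Z$ is covered by that row. For size: every line of $K$ is saturated (rows not in $Z$ must be matched, since unsaturated rows start the search and so lie in $Z$; columns in $Z$ are saturated by the no-augmenting-path property just established), and no edge of $M$ can have both endpoints in $K$, since a column $j\in Z$ forces its $M$-partner row into $Z$. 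Thus $K$ injects into $M$, giving $|K|\le|M|=\nu$, so $\tau\le|K|\le\nu$.

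Combining the two bounds yields $\nu\le\tau\le\nu$, hence $\tau=\nu$. The main obstacle is the hard direction, and within it the single delicate point is the claim that the alternating search never reaches an unsaturated column: this is exactly where maximality of $M$ is used, and it is what makes the constructed $K$ both a genuine cover and small enough. Everything else — the well-definedness of the construction (each saturated column meeting exactly one one of $M$) and the two verifications above — is routine bookkeeping once that point is secured.
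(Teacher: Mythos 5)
Your proof is correct, but it follows a genuinely different route from the paper. You give the self-contained alternating-path (Hungarian) argument: starting from a maximum set $M$ of pairwise non-collinear ones, you grow alternating sequences from the unsaturated rows, use maximality of $M$ to rule out augmenting paths, and then read off an explicit cover $K$ (columns reachable, rows unreachable) of size at most $|M|$; the only point you gloss over is that a reachable unsaturated column yields a \emph{simple} alternating path before toggling, which is routine. The paper instead fixes a \emph{minimum} cover consisting of $z$ rows and $s$ columns, puts $A$ in block form with a zero block, and argues by induction that the off-diagonal blocks $A_{12}$ and $A_{21}$ contain $z$ and $s$ independent ones respectively (else one could build a smaller cover of $A$), handling the degenerate cases $z=0$ and $z=m$ by appeal to Hall's marriage theorem, which is proved immediately afterwards in the same section. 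The trade-off: your argument is constructive, algorithmic, and logically independent of Hall's theorem, whereas the paper's is shorter but leans on an induction whose structure is only sketched and deliberately intertwines K\"onig--Egerv\'ary with the marriage theorem, which fits the expository aim of that section (the two results are presented there as ``two related propositions'').
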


Example: $$
\begin{array}{cccccc}
&\downarrow & \downarrow \\
\rightarrow&1 & 0 & \underline{1} & 1 \\
&0 & \underline{1} & 0 & 0 \\
&\underline{1} & 0 & 0 & 0 \\
&1 & 0 & 0 & 0 \\
\end{array}
$$

\begin{proof}
Let $r$ be the minimum number of the lines of $A$ that contain all of
the ones of $A$, and let $e$ be the maximum number of the ones that do
not share a line.

Note that $r \geq e$, since none of the $r$ elements of a minimum set
of lines contains two ones of a maximal system of ones. We
will show that $r \leq e$.

For this purpose, we fix a minimal set of lines consisting of $z$
rows and $s$ columns ($z + s = r$). Without loss of generality, we may
assume that they are the first $z$ rows and $s$ columns of the matrix:
$$A = \left[ \begin{array}{cc}A_{11} & A_{12} \\ A_{21} & 0 \end{array} \right]$$
where $A_ {11}$ is a $z \times s$ matrix. First, let $0 < z < m$.

Now $A_{12}$ has a maximal system of $z$ ones, since otherwise --- we may assume that the theorem is already proven for $A_{12}$ --- $A_{12}$ would have a minimal system of size less than $z$, which in combination with the first $s$ columns of $A$ would yield a system of less than $r$ lines that cover all ones of $A$, contradicting the minimality of $r$. Similarly,  $A_{21}$ has a maximal system of $s$ ones. Thus, in $A$, we have a maximal system of at least $s+z=r$ ones, so $e\geq r$. In the cases $z = m$ and $z = 0$, one can consider $A$ and/or $A^T$ as the incidence matrix of a system of $n$ or $m$ sets for which the assertion is a consequence of the following theorem. 
\end{proof}

Now let's discuss the marriage theorem of P. Hall: First, a definition.

\begin{definition}
Let $S_1, \dots, S_m$ be subsets of $S$. An $m$-tuple $(a_1, \dots, a_m)$ of distinct elements of $S$ with $a_i \in S_i$ ($i = 1, \dots, m$) is called an individual representative system.
\end{definition}

The marriage theorem provides a necessary and sufficient condition for
a system $S_1, \dots ,S_m$ of subsets to have an individual
representative system.

\begin{theorem}
$S_1, \dots ,S_m$ have an individual representative system if and only if $|S_{i_1} \cup \dots \cup S_{i_k}| \geq k$ for all $k = 1, \dots, m$ for all $k$-subsets $\{i_1, \dots, i_k\} \subseteq \{1, \dots, m\}$.
\end{theorem}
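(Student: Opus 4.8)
The plan is to treat the two directions separately, with the forward (necessity) direction being immediate and the reverse (sufficiency) direction carrying all the weight. For necessity, suppose $(a_1,\dots,a_m)$ is an individual representative system. Then for any index set $\{i_1,\dots,i_k\}$ the elements $a_{i_1},\dots,a_{i_k}$ are distinct and each $a_{i_j}\in S_{i_j}$, so they are $k$ distinct members of $S_{i_1}\cup\cdots\cup S_{i_k}$, giving $|S_{i_1}\cup\cdots\cup S_{i_k}|\ge k$. This needs only counting.

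For sufficiency, I would encode the set system as a zero-one matrix and invoke Theorem 5.1. Let $A=(a_{ij})$ be the $m\times|S|$ incidence matrix with $a_{ij}=1$ iff the $j$th element of $S$ lies in $S_i$. A collection of ones no two of which share a line is exactly a partial individual representative system, so a full system exists iff $A$ has a system of $m$ ones no two in a common line. By the König--Egerv\'ary theorem (Theorem 5.1), the maximum size $e$ of such a system equals the minimum number $r$ of lines covering all the ones; thus it suffices to prove $e=m$, equivalently $r\ge m$.

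Suppose for contradiction that $r<m$, and fix a minimal cover consisting of $z$ rows and $s$ columns with $z+s=r<m$. Every one lying in a row outside the cover must be covered by one of the $s$ chosen columns, so the $m-z$ sets $S_i$ indexed by the uncovered rows all lie inside those $s$ columns, and hence their union has at most $s$ elements. Since $r<m$ forces $z<m$, this is a nonempty subfamily, and Hall's condition applied to it gives $m-z\le s=r-z$, whence $m\le r$, contradicting $r<m$. Therefore $r\ge m$, an individual representative system exists, and sufficiency follows.

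The main obstacle I anticipate is not the matrix argument itself but a circularity: the proof of Theorem 5.1 given above disposes of its boundary cases $z=0$ and $z=m$ by appealing to precisely this marriage theorem. To keep the argument honest I would either re-prove those two boundary cases of Theorem 5.1 directly (when $z=m$ a minimal cover forces $s=0$, and one checks $e=m$ by hand), or, cleaner still, replace the whole of the above by a self-contained induction on $m$: if every proper subfamily has strict surplus, remove one representative of $S_m$ from $S$ and apply the inductive hypothesis to $S_1,\dots,S_{m-1}$; otherwise pick a critical subfamily of some size $k<m$ whose union has exactly $k$ elements, represent it by induction, delete those $k$ elements, verify that Hall's condition survives on the remaining $m-k$ sets, and apply induction again. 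The only delicate point in that alternative is checking that Hall's condition is inherited in the critical-subfamily case.
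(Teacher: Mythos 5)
Your proposal is correct, but only because of your fallback; your primary route is genuinely different from the paper's, and inside this paper it cannot stand on its own. The paper dismisses necessity as obvious and proves sufficiency by a self-contained induction on $m$ (the critical-family argument): either every proper subfamily has strict surplus, in which case one fixes a representative of one set, deletes it from the others, and applies induction; or some $k$-subfamily with $k<m$ has union of size exactly $k$, in which case one represents it by induction, deletes its $k$ elements from the remaining sets, verifies that Hall's condition survives (the displayed contradiction in the paper), and inducts again. Your primary route instead derives the theorem from Theorem 5.1 (K\"onig--Egerv\'ary) via the incidence matrix; that derivation is the standard one and your counting of the uncovered rows is correct as far as it goes, but, as you yourself observe, the paper's proof of Theorem 5.1 disposes of its boundary cases $z=0$ and $z=m$ by appealing to this very marriage theorem, so within this paper the derivation is circular. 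Of your two remedies, the second (the self-contained induction on $m$, splitting into the strict-surplus case and the critical-subfamily case) is precisely the paper's proof, so the safe version of your proposal coincides with it. The first remedy, however, underestimates the work: when $z=m$ (so minimality forces $s=0$ and $r=m$), showing $e=m$ ``by hand'' is not a routine check --- the minimality of the all-rows cover is exactly Hall's condition for the row-sets, and extracting from it a system of $m$ ones in distinct lines is the entire content of the marriage theorem, so you would need to import an independent argument (augmenting paths, or the induction itself) to fill that case. In short, the K\"onig route is attractive because it unifies the two theorems and makes both directions transparent via matching duality, but the paper's logical ordering forces Hall's theorem to be proved independently first, which is exactly what its induction accomplishes.
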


\begin{proof}
It is only necessary to prove the sufficiency of the condition. For $m = 1$ it is trivial. To prove it for $m$, if it is true for all $m' < m$, we break into two cases.\\
\textbf{Case 1}: For all $k = 1, \dots, m-1$ and all k-subsets $\{i_1, \dots, i_k\} \subseteq \{1, \dots, m\}$, $|S_{i_1} \cup \dots \cup S_{i_k}| \geq k+1$.\\
\textbf{Case 2}: For some $k \in \{1, \dots, m-1\}$, there exists a $k$-subset $\{i_1, \dots, i_k\} \subseteq \{1, \dots, m\}$, $|S_{i_1} \cup \dots \cup S_{i_k}| = k$.

In the first case, fix an $a_1 \in S_1$ and set $S_j' = S_{j} - \{a_1\}$ ($j = 2,\dots,m$). This set system then satisfies the condition of the theorem and thus has, by induction, an individual representative system for $S_1,\dots,S_m$.

In the second case, the exceptional indices are, without loss of generality, $\{i_1,\dots,i_k\} = \{1,\dots,k\}$. By induction, $S_1, \dots, S_k$ have an individual representative system $(a_1,\dots, a_k)$. To the sets $S_j^* : = S_j - \{a_1, \dots, a_k\}$ ($j = k + 1, \dots, m$), we may apply the induction hypothesis to find an individual representative system $(a_{k+1}, \dots,a_m)$ since when
$$|S_{k+1}^* \cup \cdots \cup S^*_{k+k^*}| < k^*$$
then
$$|S_1 \cup \cdots \cup S_k \cup S_{k+1}^* \cup \cdots \cup S_{k+k^*}| < k + k^*$$
a contradiction.
Thus, $(a_1,\dots,a_m)$ is an individual representative system of $S_1,\dots,S_m$.
\end{proof}

This completes the proof of Theorem 5.2; A sharper statement is proved in [1].

\section{Permanents}

Let $A = [a_{ij}]$ be an $m \times n$ matrix ($m \leq n$). The permanent of $A$ is defined by
$$\per(A) = \sum A_{1i_i} \cdots A_{mi_m},$$
where $(i_1,\dots,i_m)$ runs over all permutations of $\{1,\dots,m\}$.

The relationship with the individual representative systems is the
following: Let $S_1,\dots,S_m$ be subsets of an $n$-element set $S$
with $m \leq n$, and let $A$ be the incidence matrix of the
system. The number of all individual representative systems is
$\per(A)$.

The permanent of a matrix is often very difficult to calculate. The
following formula, which we cite without proof from [1], reduces the
computation a bit with $n \times n$ matrices:
\begin{equation}
\per (A) = S (A) -\sum_{A_1} S(A_1) + \sum_{A_2} S(A_2) - \cdots + (-1)^{n-1} \sum_{A_{n-1}} S(A_{n-1})
\end{equation}
where $A_r$ is an $n \times (n-r)$ submatrix of $A$ and $S(A_r)$ is the product over all row sums of $A_r$.

For example, if we compute the permanent of $J-I$ according to this formula, we get the following expression:
$$\per(J-I) = \sum_{r=0}^{n-1} (-1)^r \binom{n}{r} (n-r)^r(n-r-1)^r.$$

On the other hand, of course, $\per(J-I)$ is equal to the number of derangements $D_n$ (`derangement number') of $n$ objects, thus $= n!\left(1-\frac{1}{1!} + \cdots + (-1)^n\frac{1}{n!}\right)$, so
$$n!\left(1-\frac{1}{1!} + \cdots + (-1)^n\frac{1}{n!}\right) = \sum_{r=0}^{n-1} (-1)^r \binom{n}{r} (n-r)^r(n-r-1)^r.$$
$$X=
\left[
\begin{array}{ccccc}
x & y & & &  \\
 & x & y & & 0\\
 & & \ddots & \ddots \\
 &0 & & x & y \\
 y & & & & x
\end{array}
\right]
$$

We will compute $\per(X)$ in two different ways.

On the one hand, $\per(X) = x^n + y^n$. On the other hand, by applying (6.1),
$$\begin{array}{rcl}
\per(A) & = & \displaystyle\sum_{r=0}^{n-1} (-1)^rS(X_r) \\
& = & \displaystyle\sum_{r=0}^{n-1} (-1)^ra_r(xy)^r(x+y)^{n-2r},
\end{array}$$
where $a_r$ is the number of possibilities to select $r$ non-neighbouring elements from a circle of $n$ objects. According to a lemma from Kaplanski (see [1] p. 34):
$$a_r = \frac{n}{n-r}\binom{n-r}{r} \quad \text{(obviously }a_r= 0 \text{ for } r > \left\lfloor\frac{n}{2}\right\rfloor).$$

Similar to the case of determinants, the question arises for which
matrices $A$ is $\per(A) \neq 0$.

A square matrix of order $n$ is called doubly stochastic if its
elements are nonnegative and all row and column sums are equal to 1.

For doubly stochastic matrices $A$, $\per(A)$ is positive. This follows immediately from the Birkhoff-K\"onig Theorem:

\begin{theorem}
Let $A$ be doubly stochastic of order $n$. Then $A = c_1P_1 + \cdots + c_tP_t$, where each $P_\tau$ is a permutation matrix and $c_1,\dots, c_t$ are nonnegative real numbers with $\sum c_i = 1$.
\end{theorem}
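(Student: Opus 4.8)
The plan is to prove this by induction on the number of nonzero entries of $A$, at each stage peeling off a suitably scaled permutation matrix so that strictly fewer nonzero entries remain. The engine driving each step is the marriage theorem (Theorem 5.2 above): I would use it to locate a permutation matrix whose support lies inside the support of $A$.

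The crucial first step is to show that every doubly stochastic $A$ has a \emph{positive diagonal}, i.e.\ a permutation $\sigma$ with $a_{i\sigma(i)}>0$ for all $i$. For each row $i$ set $S_i=\{j:a_{ij}>0\}$, and verify the hypothesis of the marriage theorem for these sets. Given any $k$ rows $i_1,\dots,i_k$, all of their mass lies in the columns of $U=S_{i_1}\cup\cdots\cup S_{i_k}$; since each of these rows sums to $1$, the mass these $k$ rows place into the columns of $U$ totals exactly $k$. But each column sums to $1$, so the mass those columns can absorb is at most $|U|$. Hence $k\le|U|$, which is precisely the marriage condition. Theorem 5.2 then furnishes an individual representative system, that is, the desired permutation $\sigma$; let $P_1$ be its permutation matrix.

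Next I would carry out the reduction. Put $c_1=\min_i a_{i\sigma(i)}$, which is positive. If $c_1=1$ then $A=P_1$ and we are done. Otherwise $0<c_1<1$, and I would set
$$A'=\frac{1}{1-c_1}\bigl(A-c_1P_1\bigr).$$
Subtracting $c_1$ from each entry $a_{i\sigma(i)}$ keeps every entry nonnegative (by the choice of $c_1$) and drops every row sum and column sum from $1$ to $1-c_1$, so after the rescaling $A'$ is again doubly stochastic. Crucially, the entry attaining the minimum becomes $0$, so $A'$ has strictly fewer nonzero entries than $A$.

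Finally I would close the induction. The base case is a doubly stochastic matrix with the fewest possible nonzero entries: it has at least one positive entry in each of its $n$ rows, and if it has exactly $n$ then each must equal $1$, forcing a permutation matrix (so $t=1$, $c_1=1$). For the inductive step, write $A'=d_1Q_1+\cdots+d_sQ_s$ with $d_j\ge0$, $\sum_j d_j=1$, and each $Q_j$ a permutation matrix; then
$$A=c_1P_1+(1-c_1)A'=c_1P_1+\sum_{j=1}^{s}(1-c_1)d_j\,Q_j,$$
whose coefficients sum to $c_1+(1-c_1)=1$, as required. The only genuinely substantive point is the verification of the marriage condition for the sets $S_i$; once a positive diagonal is guaranteed, the remainder is routine bookkeeping about support sizes and convex coefficients.
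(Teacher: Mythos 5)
Your proof is correct, and it follows the same overall strategy as the paper---greedily peeling off $c_1P_1$, where $c_1$ is the smallest entry of a positive diagonal---but the key lemma you invoke is different. The paper finds the positive diagonal via the K\"onig--Egerv\'ary theorem (Theorem 5.1): if $A$ had no $n$ positive entries in distinct rows and columns, its positive entries could be covered by $z$ rows and $s$ columns with $z+s<n$; but those lines carry total mass at most $z+s<n$, while the entries of $A$ sum to $n$, a contradiction. You instead apply the marriage theorem (Theorem 5.2) to the sets $S_i=\{j: a_{ij}>0\}$, verifying Hall's condition by a mass-counting argument. Since the paper presents Theorems 5.1 and 5.2 as ``two related propositions'' (it even leans on 5.2 to finish the proof of 5.1), the two routes are close cousins and neither is substantially longer; which one feels more natural is largely a matter of whether you prefer a covering contradiction or a direct verification of Hall's condition. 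Two points where your write-up is actually tighter than the paper's: you make termination precise by inducting on the number of nonzero entries, with an explicit base case (exactly $n$ nonzero entries forces a permutation matrix), whereas the paper simply asserts that repeated application ``finally yields the desired representation''; and your renormalization $A'=(A-c_1P_1)/(1-c_1)$ keeps the matrix doubly stochastic at every step, giving a clean induction statement, while the paper instead tracks matrices whose common row and column sums shrink to $1-c_1$ at each stage.
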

\begin{proof}
$A$ has $n$ positive elements which occur in different rows and columns; Otherwise, Theorem 5.1 applied with the positive elements of A with $z$ rows and $s$ columns where $z + s < n$, we have that $A$ would not be doubly stochastic. Now let $P_1$ be the permutation matrix, which has ones at the corresponding entries; Let $c_1$ be the smallest of the selected $n$ positive elements. Then $A - c_1P_1$ is a matrix of nonnegative elements whose row and column sums are all equal to $1-c_1$.
Repeated application of the preceding conclusion to $A-c_1P_1$ instead of to $A$ and so on finally yields the desired representation.
\end{proof}

\textbf{Note}: If $A$ is a zero-one matrix, with identical row and column sums, then one finds in this way, of course, a representation with $c_1 = 1$ for all $i$.

A sharp lower estimate of $\per(A)$ for doubly stochastic matrices $A$
is not yet known. The conjecture of van der Waerden, which states
that $$\per(A) \geq \frac{n!}{n^n}$$ holds, with equality only if
$A=\frac1nJ$, has only been verified for small values of $n$ ($n \leq
4$).\blue{\footnote{\blue{The van der Waerden conjecture has since been
proved. See\\ 
J.\,H. van Lint, 
The van der Waerden conjecture: two proofs in one year, 
Math. Intelligencer {\bf4} (1982), 72--77.\\
G.\,P.~Egorychev, 
Solution of the van der Waerden problem for permanents, 
{\em Soviet Math.\ Dokl\/}, {\bf 23} (1981), 619--622.\\
D.\,I.~Falikman, 
Proof of the van der Waerden conjecture regarding the permanent of a 
doubly stochastic matrix, {\em Math.\ Notes\/} {\bf 29} (1981), 475--479.
}}}

We briefly check for $n = 2$: Obviously a doubly stochastic matrix has the form
$$\left[ \begin{array}{cc}x & 1-x \\ 1-x & x \end{array} \right]$$
with $0 \leq x \leq 1$, and therefore
$$\per \left[ \begin{array}{cc}x & 1-x \\ 1-x & x \end{array} \right] = x^2 + (1-x)^2 \geq \frac12 = 2!/2^2.$$

Marcus and Minc (in: Permanents, Am. Math. Monthly, 72, (1965), 577-591) were able to prove the weaker result:

\begin{theorem}
For a doubly stochastic matrix $A = [a_ij]$, there exists a
permutation $\sigma$ of $\{1,\dots,n\}$ such that 
$$\prod_{j=1}^{n} a_{j\sigma(j)} \geq \frac{1}{n^n}.$$
\end{theorem}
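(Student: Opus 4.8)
The plan is to derive the bound from the Birkhoff--König theorem (Theorem 6.2) together with a weighted geometric-mean argument, never needing to identify the optimal permutation explicitly. First I would write $A = c_1 P_1 + \cdots + c_t P_t$ with each $c_\tau > 0$ (discard any zero coefficients), $\sum_\tau c_\tau = 1$, and $P_\tau$ the permutation matrix of a permutation $\sigma_\tau$. The one structural fact I need to record is that the weight of the permutations sending $i$ to $j$ recovers the matrix entry: reading off the $(i,j)$ entry of the decomposition gives
$$a_{ij} = \sum_{\tau:\,\sigma_\tau(i)=j} c_\tau.$$
In particular each candidate product $x_\tau := \prod_{i=1}^n a_{i\sigma_\tau(i)}$ is strictly positive, since $a_{i\sigma_\tau(i)} \ge c_\tau > 0$.

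The key step is to take the weighted geometric mean of the $x_\tau$ with weights $c_\tau$ and observe that it collapses to a clean product over the entries of $A$. Interchanging the order of the two products and, for each fixed $i$, grouping the factors $a_{i\sigma_\tau(i)}^{c_\tau}$ according to the value $j=\sigma_\tau(i)$, the displayed identity above turns the accumulated exponent on $a_{ij}$ into $a_{ij}$ itself. This yields
$$\prod_{\tau=1}^{t}\left(\prod_{i=1}^{n} a_{i\sigma_\tau(i)}\right)^{\!c_\tau} = \prod_{i=1}^{n}\prod_{j=1}^{n} a_{ij}^{\,a_{ij}}.$$
Because $\sum_\tau c_\tau = 1$, a weighted geometric mean never exceeds the largest of its terms, so $\max_\tau x_\tau \ge \prod_{i,j} a_{ij}^{a_{ij}}$. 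Hence it suffices to prove $\prod_{i,j} a_{ij}^{a_{ij}} \ge n^{-n}$, after which one of the permutations $\sigma_\tau$ supplies the required $\sigma$.

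For the remaining inequality I would pass to logarithms and invoke concavity: the function $f(x) = -x\log x$ (with $f(0)=0$) is concave, the $n^2$ entries satisfy $\sum_{i,j} a_{ij} = n$ because each of the $n$ rows sums to $1$, and Jensen's inequality gives $\frac{1}{n^2}\sum_{i,j} f(a_{ij}) \le f\!\left(\frac{1}{n^2}\sum_{i,j} a_{ij}\right) = f(1/n) = \frac{\log n}{n}$. Therefore $-\sum_{i,j} a_{ij}\log a_{ij} \le n\log n$, which is exactly $\prod_{i,j} a_{ij}^{a_{ij}} \ge e^{-n\log n} = n^{-n}$, completing the proof.

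The routine parts are the Jensen estimate and the bookkeeping with $\sum_j a_{ij}=1$; the genuine obstacle is spotting the geometric-mean identity, namely that averaging the products $x_\tau$ over the Birkhoff decomposition with weights $c_\tau$ produces precisely $\prod_{i,j} a_{ij}^{a_{ij}}$. Once that collapse is seen, everything else is convexity.
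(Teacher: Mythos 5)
Your proof is correct, but there is nothing in the paper to compare it against: this result of Marcus and Minc is quoted in [Rys67] without any proof (the reader is simply referred to their \emph{Monthly} article on permanents), so your argument is necessarily an independent route rather than a variant of the paper's. Judged on its own merits it is sound. One small slip: the Birkhoff--K\"onig theorem you invoke is the paper's Theorem 6.1, not 6.2 (6.2 is the statement you are proving). The entry identity $a_{ij}=\sum_{\tau:\,\sigma_\tau(i)=j}c_\tau$ is exactly what the decomposition says, and the collapse
$$\prod_{\tau=1}^{t} x_\tau^{c_\tau}=\prod_{i=1}^{n}\prod_{j=1}^{n}a_{ij}^{\,a_{ij}}$$
is valid with the convention $0^0=1$: a pair $(i,j)$ with $a_{ij}=0$ is covered by no $\sigma_\tau$, hence contributes no factor on the left, which matches your convention $f(0)=0$ on the right. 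The step $\max_\tau x_\tau\ge\prod_\tau x_\tau^{c_\tau}$ needs only $c_\tau\ge 0$, $\sum_\tau c_\tau=1$ and $x_\tau>0$, all of which you have established, and the Jensen estimate with $f(x)=-x\log x$ and $\sum_{i,j}a_{ij}=n$ correctly yields $\prod_{i,j}a_{ij}^{a_{ij}}\ge n^{-n}$. Worth noting: your argument actually proves the stronger, entropy-type statement $\max_\sigma\prod_i a_{i\sigma(i)}\ge\prod_{i,j}a_{ij}^{a_{ij}}\ge n^{-n}$, where the second inequality is an equality precisely when $A=\frac{1}{n}J$ (by strict concavity of $f$); this refinement is a genuine bonus over the bare statement in the paper.
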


One may wish to generalize van der Waerden's conjecture by the
following: $$\per(AB) \leq \min(\per(A),\per(B)).$$

This inequality was, however, rejected by Jurkat by the counter-example
$$A = \frac{1}{24} \left[ \begin{array}{ccc} 11 & 5 & 8 \\ 13 & 11 & 0 \\ 0 & 8 & 16 \end{array} \right]$$
$$B = \frac{1}{2} \left[ \begin{array}{ccc} 1 & 1 & 0 \\ 1 & 1 & 0 \\ 0 & 0 & 2 \end{array} \right]$$
In this case, $\per(A) = \frac{3808}{13824} < \per(AB) = \frac{3840}{13824}$. A further conjecture has already been expressed, namely by $\per(AA^T) < \per(A)$. But it too could be disproved, by M. Newman with the counter-example
$$A = \frac{1}{2} \left[ \begin{array}{cccc} 1 & 1 & 0 & 0 \\ 0 & 1 & 1 & 0 \\ 0 & 0 & 1 & 1 \\ 1 & 0 & 0 & 1 \end{array} \right]$$
for which $\per(AA^T) = \frac{9}{64} > \per(A) = \frac{8}{64}$.

The van der Waerden conjecture also makes sense for matrices other
than doubly stochastic. Consider, e.g., the class $U(R, S)$ of all 
$m\times n$ matrices of nonnegative elements with given vectors 
$R =(r_i)$ and $S = (s_k)$ for the row and column sums. Substituting 
$R = S = (1 .... 1)_n$, we again have the class of doubly stochastic
matrices; Here at least the maximum of the permanent function is
known; it is equal to 1 and is attained precisely for the permutation
matrices.

Now let $m = n$ and $R = (1,n+1,\dots,n+1)$, $S = (n+1,\dots,n+1,1)$. The class $U(R, S)$ contains, for example,
$$
\left[
\begin{array}{cccc}
1 &  &  & 0 \\
n & 1 &  &  \\
 & \ddots & \ddots \\
0 &  & n & 1
\end{array}
\right]
$$

The permanent of such matrices is always positive, and it is useful to ask for the maximum and minimum of the permanent function. Is the minimum of $\per(A)$ over this class $= 1$ if only integer elements are allowed? A sharp estimate for the upper bound was shown recently (see [8]):

\begin{theorem}
Let $A \in U(R, S)$, where $R = (r_j)$, $S = (s_j)$, and $r_1 \leq r_2 \leq \cdots \leq r_n$, $s_1 \leq s_2 \leq \cdots \leq s_n$. Then $$\per(A) \leq \prod_{j=1}^{n} \min(r_j,s_j).$$
\end{theorem}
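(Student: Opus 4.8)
The plan is to prove the bound by induction on $n$, expanding the permanent along a line of smallest sum and controlling all the resulting minors by a single $k$-independent estimate.

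\textbf{Setup and reduction.} Since $\per(A)=\per(A^{T})$ and transposing $A$ interchanges the roles of $R$ and $S$ while preserving both the hypothesis (each list is sorted) and the right-hand side $\prod_j\min(r_j,s_j)$, I may assume without loss of generality that $r_1\le s_1$, so that $\min(r_1,s_1)=r_1$. The base case $n=1$ is immediate, since then $\per(A)=a_{11}=r_1=s_1$. For the inductive step I expand along the first row,
\[
\per(A)=\sum_{k=1}^{n}a_{1k}\,\per\bigl(A(1\mid k)\bigr),
\]
where $A(1\mid k)$ is the $(n-1)\times(n-1)$ nonnegative matrix obtained by deleting row $1$ and column $k$. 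As $\sum_k a_{1k}=r_1$, it suffices to prove the uniform bound
\[
\per\bigl(A(1\mid k)\bigr)\le\prod_{j=2}^{n}\min(r_j,s_j)\qquad\text{for every }k,
\]
since summing then gives $\per(A)\le r_1\prod_{j=2}^n\min(r_j,s_j)=\prod_{j=1}^n\min(r_j,s_j)$.

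\textbf{Proving the minor bound.} The matrix $A(1\mid k)$ has row sums $r_i-a_{ik}$ (for $i\ge2$) and column sums $s_l-a_{1l}$ (for $l\ne k$); let $\tilde r$ and $\tilde s$ denote these two lists arranged in increasing order. The induction hypothesis applied to $A(1\mid k)$ yields $\per(A(1\mid k))\le\prod_j\min(\tilde r_j,\tilde s_j)$. I would then invoke the elementary fact that order statistics are monotone under a pointwise decrease of the entries: since $r_i-a_{ik}\le r_i$ and the $r_i$ are already sorted, $\tilde r_j\le r_{j+1}$; and since $s_l-a_{1l}\le s_l$, the quantity $\tilde s_j$ is at most the $j$-th smallest of $\{s_l:l\ne k\}$. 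Finally, deleting any single entry $s_k$ from the sorted list $s_1\le\cdots\le s_n$ leaves a list whose $j$-th smallest is at most $s_{j+1}$ (it equals $s_{j+1}$ for $j\ge k$, and equals $s_j\le s_{j+1}$ for $j<k$). Chaining these comparisons gives $\min(\tilde r_j,\tilde s_j)\le\min(r_{j+1},s_{j+1})$ for every $j$, and taking the product over $j=1,\dots,n-1$ establishes the minor bound.

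\textbf{The main obstacle.} The delicate point is that $k$ ranges over all columns, yet the bound on $\per(A(1\mid k))$ must be independent of $k$ for the final summation to collapse cleanly to $r_1\prod_{j\ge2}\min(r_j,s_j)$. What makes this possible is exactly the column-deletion comparison above: removing an arbitrary column sum $s_k$ and re-sorting never produces column sums that are larger, order statistic by order statistic, than those obtained by removing the smallest column sum $s_1$. Getting this monotonicity precisely right --- together with the fact that pointwise decreases of the row and column sums can only decrease each order statistic --- is the crux of the argument; once these two monotonicity facts are in hand (and I would isolate each as a short lemma), the induction runs automatically.
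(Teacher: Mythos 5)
Your proof is correct, but there is nothing in the paper to compare it against: [Rys67] states this theorem without proof, introducing it only with ``A sharp estimate for the upper bound was shown recently (see [8])'', where [8] is Jurkat's paper in \emph{J.\ Algebra} \textbf{5} (1967). Judged on its own merits, your induction is sound. The expansion $\per(A)=\sum_k a_{1k}\,\per\bigl(A(1\mid k)\bigr)$ is valid; the two monotonicity facts you isolate are both true and elementary (if $x_i\le y_i$ pointwise then each order statistic of the $x$'s is at most the corresponding order statistic of the $y$'s, and deleting one entry from a sorted list makes its $j$-th smallest element at most the $(j+1)$-st of the original); chaining them gives $\min(\tilde r_j,\tilde s_j)\le\min(r_{j+1},s_{j+1})$, and since all quantities are nonnegative the products compare as claimed, so the $k$-independent minor bound and the final summation go through. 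The one point worth stating explicitly in a final write-up is that applying the induction hypothesis to $A(1\mid k)$ requires first permuting its rows and columns so that both sum vectors are increasing; this is harmless because the permanent is invariant under row and column permutations, but it is the step that licenses replacing the raw sum lists by their sorted versions $\tilde r,\tilde s$. Your argument is, in essence, the classical proof of the Jurkat--Ryser inequality, i.e., the kind of argument found in the source the paper cites, so it is a faithful reconstruction rather than a genuinely different route.
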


In view of the difficulties of these problems, it is useful first to restrict to the class $R$ of all zero-one matrices of order $n$ with row and column sums $= k$ ($1 \leq k \leq n$) and determine the extremes of the permanent over $R$.

For this, there is initially a lower estimate of M. Hall: For $A \in R$, 
$\per(A) > k!$. 
And for the more general case of row sums $r_1, r_2, \dots, r_n$, 
an inequality of Minc
$$\per(A) \leq \prod_{j=1}^{n} \frac{r_j+1}{2}.$$
In addition, Minc conjectured\blue{\footnote{\blue{The first part of this conjecture has also been proved. See\\
L.\,M.~Br\`egman, Some properties of nonnegative matrices
and their permanents, {\it Soviet Math.\ Dokl.\/} {\bf14} (1973), 945--949.\\
A. Schrijver, A short proof of Minc's conjecture, 
{\it J. Combinatorial Theory Ser. A} {\bf25} (1978), 80--83. 
}}}
$$\per(A) \leq \prod_{j=1}^n (r_j!)^{1/r_j}$$
and
$$\per(A) \leq \prod_{j=1}^n (r_j!)^{1/n} \cdot \left(\frac{r_j+1}{2}\right)^{\frac{n-r_j}{n}}.$$

Combining van der Waerden's conjecture of the first bound of Minc's, one obtains the estimate
$$\left(\frac{n!}{n^n}\right)^r \cdot \prod_{j=1}^r (n+1-j)^n \leq L(r,n) \leq \prod_{j=1}^r (n+1-j)^{\frac{n}{n+1-j}},$$
where $L (r, n)$ is the number of (non-normalized) $r \times n$ Latin rectangles; The first inequality is based on van der Waerden's conjecture and the second of Minc's conjectures.


\noindent
Basic facts dealt with in this paper can be found in detail in the literature:

\medskip

\begin{enumerate}

\item[{[1]}] Ryser, H. J., Combinatorial Mathematics, 
Carus Mathematical Monographs {\bf14}, 1963.

\item[{[2]}] Hall, M., Combinatorial Theory, Ginn \& Co., 1967.

\end{enumerate}

\medskip

\noindent
Many things that could only be hinted at here without proof are
contained in recent journal articles by H.J. Ryser and co-authors:

\smallskip

\begin{enumerate}

\item[{[3]}] Ryser, H. J., Maximal determinants in combinatorial investigations,
Can. J. Math. 8, 245--249 (1956).

\item[{[4]}] Ryser, H. J., Inequalities of compound and induced matrices with
applications to combinatorial analysis, 
Illinois J. Math. 2, 240--253 (1958).

\item[{[5]}] Fulkerson, D. R., and Ryser, H. J.,
Multiplicities and Minimal widths for $(0,1)$-matrices,
Can. J. Math. 14, 498--508 (1962).

\item[{[6]}] Fulkerson, D. R., and Ryser, H. J.,
Width sequences for special classes of $(0,1)$-matrices,
Can. J. Math. 15, 371--396.

\item[{[7]}] Jurkat, W. B., and Ryser, H. J.,
Matrix factorizations of determinants and permanents,
J. Algebra 3, 1--27.

\item[{[8]}] Jurkat, W. B., 
Term ranks and permanents of nonnegative matrices,
J. Algebra 5, 342--357 (1967).

\end{enumerate}

\end{document}